\numberwithin{equation}{section}
\def\ov{\overline}
\newcommand{\pe}{\perp}
\let\Re=\undefined\DeclareMathOperator*{\Re}{Re}
\newcommand{\R}{\mathbb{R}}
\newtheorem{theorem}{Theorem}[section]
\newtheorem{lemma}[theorem]{Lemma}
\newtheorem{corollary}[theorem]{Corollary}
\newtheorem{proposition}[theorem]{Proposition}
\theoremstyle{definition}
\newtheorem{remark}[theorem]{Remark}
\newcommand{\Extend}[5]{\ext@arrow0099{\arrowfill@#1#2#3}{#4}{#5}}
\begin{document}
\title[ KP-I equation  ]{Exact controllability of linear KP-I equation}

\author[]{Chenmin Sun }
\address{Universit\'e C\^{o}te d'Azur, LJAD, France}
\email{csun@unice.fr}
\thanks{The author is supported by the European Research Council, ERC-2012-ADG, project number 320845: Semi classical Analysis and Partial Differential Equations}

 \maketitle

\begin{abstract}
We prove the exact controllability of linear KP-I equation if the control input is added on a vertical domain. More generally, we have obtained the least dispersion needed to insure observability for fractional linear KP I equation.   
\end{abstract}

\section{Introduction}
In this note, we complete the study of control problem for linear KP type equations started in \cite{Rivas-Sun}. The precise model considered here is the linear KP-I equation
\begin{equation}\label{linearKP-I}
\begin{split}
\partial_tu+\partial_x^3u-\partial_x^{-1}\partial_y^2u=0,
\end{split}
\end{equation} 
where the Fourier multiplier  $\partial_x^{-1}$ is defined by
$$ \widehat{\partial_x^{-1}v}(k,\l)=\frac{1}{ik}\widehat{v}(k,\l)
$$
for all functions
$$ v\in \mathcal{D}_0'(\mathbb{T}^2):=\{v\in \mathcal{D}_0'(\mathbb{T}^2):\widehat{v}(0,l)=0 \textrm{ for all }l\in\mathbb{Z}\}.
$$
We denote by $L_0^2(\mathbb{T}^2)=L^2(\mathbb{T}^2)\cap\mathcal{D}_0'(\mathbb{T}^2)$.
For a vertical control region of the form $\omega=(a,b)_x\times\mathbb{T}_y$, we fix a non-negative real  function $g\in C_c^2(\mathbb{T})$  with %such that $(a,b)=\{x\in\mathbb{T}:g(x)>0\}$ and 
$\int_{\mathbb{T}}g=1$. In this case, we define the control input by
\begin{equation}\label{verticalcontrol}
\mathcal{G}(h)(x,y)=\mathcal{G}_{\pe}(h)(x,y):=g(x)\left(h(x,y)-\int_{\mathbb{T}}g(x')h(x',y)dx'\right).
\end{equation} 
The main result of this note is the observability from a vertical region.
\begin{theorem}\label{observabilitytheorem}
For any $T>0$, there exists $C_T>0$, such that for any solution $u\in C(\mathbb{R};L_0^2(\mathbb{T}^2))$ of \eqref{linearKP-I}, we have
\begin{equation}\label{observabilityinequality}
\|u(0)\|_{L^2(\mathbb{T}^2)}^2\leq C_T\int_0^T\int_{\mathbb{T}^2}|\mathcal{G}u(t,x,y)|^2dxdydt.
\end{equation}
\end{theorem}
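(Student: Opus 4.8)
The plan is to use the translation invariance of \eqref{linearKP-I} and of $\mathcal{G}$ in the variable $y$ to reduce the two-dimensional estimate to a uniform family of one-dimensional ones. Writing $u(t,x,y)=\sum_{l\in\Z}u_l(t,x)e^{ily}$, each slice solves $\partial_tu_l+\partial_x^3u_l+l^2\partial_x^{-1}u_l=0$ on $\T_x$, and since $\mathcal{G}$ acts only in $x$ one has $(\mathcal{G}u)_l=\mathcal{G}u_l$; moreover the subtraction in \eqref{verticalcontrol} guarantees $\int_\T\mathcal{G}u_l\,dx=0$, so the flow stays in $L_0^2$. By Parseval in $y$, $\int_0^T\!\int_{\T^2}|\mathcal{G}u|^2=\sum_l\int_0^T\!\int_\T|\mathcal{G}u_l|^2$, and \eqref{observabilityinequality} follows once one proves $\|u_l(0)\|_{L^2(\T)}^2\le C_T\int_0^T\!\int_\T|\mathcal{G}u_l|^2$ with $C_T$ \emph{uniform} in $l$. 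Expanding $u_l(t,x)=\sum_{k\neq0}c_k\,e^{i\omega_{k,l}t}e^{ikx}$ with dispersion relation $\omega_{k,l}=k^3+l^2/k$, the problem becomes the lower bound $\int_0^T\!\int_\T g(x)^2\big|\sum_{k\neq0}c_k\,e^{i\omega_{k,l}t}(e^{ikx}-\gamma_k)\big|^2\,dx\,dt\gtrsim\sum_k|c_k|^2$, where $\gamma_k=\int_\T g(x)e^{ikx}\,dx$.

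In the generic case where the frequencies $\{\omega_{k,l}\}_k$ are separated, I would integrate in $t$ by Ingham's inequality to decouple the modes, reducing matters to the pointwise-in-$k$ bound $\int_\T g(x)^2|e^{ikx}-\gamma_k|^2\,dx\ge c_0>0$. This holds uniformly: the integrand vanishes only if $e^{ikx}\equiv\gamma_k$ on $\mathrm{supp}\,g$, which is impossible for $k\neq0$, while Riemann--Lebesgue gives $\int g^2|e^{ikx}-\gamma_k|^2\to\|g\|_{L^2}^2$ as $|k|\to\infty$, so the quantity is bounded below.

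The main obstacle is that $\{\omega_{k,l}\}_k$ is \emph{not} uniformly separated. From $\omega_{k,l}-\omega_{k',l}=(k-k')\big(k^2+kk'+k'^2-\tfrac{l^2}{kk'}\big)$ one sees that near-resonances, and even exact coincidences $kk'(k^2+kk'+k'^2)=l^2$, occur precisely where $f(k)=k^3+l^2/k$ is flat, namely near its critical points $k_*=\pm(l^2/3)^{1/4}$. The crux is a cluster analysis that is uniform in $l$: since $f$ has a single non-degenerate critical point on each half-line, with $f''(k_*)\sim l^{1/2}$ and $f'''(k_*)=O(1)$, a Taylor expansion shows that for $l$ large each window of fixed length contains at most the two consecutive integers straddling $k_*$, while all remaining gaps are $\gtrsim l^{1/2}$ near $k_*$ and grow like $k^2$ away from it. One then groups the $\{\omega_{k,l}\}$ into blocks of cardinality $\le2$ with inter-block gaps tending to infinity, so that the \emph{block} form of Ingham's inequality applies for \emph{every} $T>0$ with a constant uniform in $l$, and concludes with a uniform lower bound on the $2\times2$ Gram matrix of $\{g(e^{ik_1x}-\gamma_{k_1}),g(e^{ik_2x}-\gamma_{k_2})\}$ within each block: since there $|k_1-k_2|=1$, this matrix is a bounded perturbation of a fixed positive-definite one and hence uniformly invertible. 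Establishing the uniform bound on block sizes and this Gram estimate --- equivalently, quantifying that the cubic dispersion defeats the $l^2/k$ term at the critical scale $k\sim l^{1/2}$ --- is the delicate point and is exactly what ``least dispersion'' refers to. The finitely many small values of $l$, for which the gaps still tend to infinity as $|k|\to\infty$, are treated individually and contribute a finite constant.

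Summing the uniform one-dimensional estimates over $l$ by Parseval then yields \eqref{observabilityinequality}. I note that the same two ingredients could instead be packaged as a compactness--uniqueness argument: the block estimate gives observability up to a compact remainder $C\|u(0)\|_{H^{-1}}^2$, which is absorbed using the unique continuation fact that a mode annihilated by $\mathcal{G}$ must vanish, since $\mathcal{G}u_l\equiv0$ forces $\sum_k c_k e^{ikx}$ to be constant on $\mathrm{supp}\,g$ and hence, by linear independence of distinct characters on an interval, $c_k\equiv0$.
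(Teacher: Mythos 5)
Your overall route --- reduction by Parseval in $y$ to a family of one--dimensional observability estimates that must be \emph{uniform in} $l$, then nonharmonic Fourier analysis (block Ingham in time plus a spatial Gram--matrix argument) --- is genuinely different from the paper's, which never invokes Ingham-type inequalities: the paper rescales to a semiclassical problem, uses Egorov propagation and sharp G\aa rding away from the critical points of $\xi^3+\xi^{-1}$, and near them combines Lebeau's spectral inequality for a low-frequency cluster with an explicit gap estimate for the cross terms. However, your cluster analysis --- which you yourself flag as the crux --- is wrong as stated. The near-resonances of $\omega_{k,l}=k^3+l^2/k$ are \emph{not} confined to ``the two consecutive integers straddling $k_*=(l^2/3)^{1/4}$''. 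On $k>0$ the function $f(k)=k^3+l^2/k$ is strictly decreasing on $(0,k_*]$ and strictly increasing on $[k_*,\infty)$, so every value above the minimum is attained once on \emph{each} branch; near-coincidences $\omega_{k_1,l}\approx\omega_{k_2,l}$ occur whenever $k_1k_2(k_1^2+k_1k_2+k_2^2)$ is close to $l^2$, with $k_1,k_2$ on opposite branches but possibly with $|k_1-k_2|$ large. Concretely, for $l=97$ one has $f(7)\approx1687.1$, $f(8)\approx1688.1$ (your straddling pair), but \emph{also} $f(9)\approx1774.4$, $f(6)\approx1784.2$: a second near-resonant pair with $|k_1-k_2|=3$; and for $l=125$ the pair $(8,9)$ satisfies $|f(8)-f(9)|\approx0.014$, so inside a block no purely temporal argument can ever separate the two coefficients uniformly. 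Thus the claim that ``all remaining gaps are $\gtrsim l^{1/2}$'' is false, and your Gram step, justified only by ``since there $|k_1-k_2|=1$'', does not cover the blocks that actually occur.

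What \emph{is} true, and repairs the argument, is a same-branch separation property: integrating $f'(s)=3(s^4-k_*^4)/s^2$ over a unit interval gives $|f(k)-f(k')|\geq \tfrac{3}{2}k_*\sim l^{1/2}$ for any two integers $k\neq k'$ on the same monotone branch. Hence any window of size $o(l^{1/2})$ contains at most one frequency from each branch: blocks of cardinality at most $2$, inter-block gaps $\gtrsim l^{1/2}$, but with \emph{no} control on $|k_1-k_2|$ within a block. You must therefore (i) prove the $2\times2$ Gram lower bound uniformly over \emph{all} pairs of distinct nonzero integers --- this does hold, since $G=g^2\geq0$ is supported in a proper subinterval of $\mathbb{T}$, so $\sup_{m\neq0}|\widehat{G}(m)|<\widehat{G}(0)$, and the corrections $\gamma_k=\int_{\mathbb{T}}ge^{ikx}dx$ are $O(|k|^{-2})$ --- and (ii) use the Hilbert-space-valued (block) version of Ingham's inequality, with values in $L^2(\mathbb{T}_x)$, since the separation of coefficients inside a block comes only from the $x$-integral. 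Together with the observation that for fixed $T$ the block-Ingham constant is uniform once the inter-block gap exceeds $C/T$ (i.e.\ for $l\geq L_0(T)$, the finitely many remaining $l$ being treated individually as you propose), these two fixes make your argument a complete and rather more elementary alternative to the paper's semiclassical proof; but as written, the resonance analysis on which everything rests is incorrect.
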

As explained in \cite{Rivas-Sun}, a consequence of HUM method of Lions \cite{JLions} is the  exact controllability of linear KP-I equation from vertical domain. 
\begin{theorem}\label{controltheorem}
	Given any $T>0$ and $u_0\in L_0^2(\mathbb{T}^2), u_1\in L_0^2(\mathbb{T}^2)$, there exists $f\in L^2((0,T);L^2(\mathbb{T}^2)$ such that the solution of the equation
	\begin{equation}\label{control}
	\partial_tu+\partial_x^3u-\partial_x^{-1}\partial_y^2u=\mathcal{G}f, u|_{t=0}=u_0
	\end{equation}
	satisfies $u|_{t=T}=u_1$.
\end{theorem}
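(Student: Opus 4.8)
The plan is to deduce exact controllability from the observability inequality of Theorem~\ref{observabilitytheorem} by the HUM duality of Lions, so that the only analytic input is the estimate \eqref{observabilityinequality} already in hand. First I would fix the functional framework. The generator $A=-\partial_x^3+\partial_x^{-1}\partial_y^2$ acts on $L_0^2(\mathbb{T}^2)$ as a Fourier multiplier whose symbol $-i(-k^3)+\tfrac{1}{ik}(-l^2)=i(k^3+l^2/k)$ is purely imaginary; hence $A$ is skew-adjoint, it generates a unitary group $S(t)=e^{tA}$ on $L_0^2(\mathbb{T}^2)$, and $S(t)^\ast=S(-t)$. I would also record that $\mathcal{G}$ is a bounded self-adjoint operator on $L_0^2(\mathbb{T}^2)$ (since $g$ is real) and that it preserves $L_0^2$: for any $h$ one checks $\int_{\mathbb{T}}\mathcal{G}h\,dx=0$, which is exactly why the subtraction is built into \eqref{verticalcontrol}. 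By Duhamel, the mild solution of \eqref{control} is $u(t)=S(t)u_0+\int_0^tS(t-s)\mathcal{G}f(s)\,ds$, so $u\in C([0,T];L_0^2(\mathbb{T}^2))$ for every $f\in L^2((0,T);L^2(\mathbb{T}^2))$ and the problem is well posed.

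Next I would reduce the control requirement to an operator equation. Steering $u_0$ to $u_1$ in time $T$ is equivalent to solving $Lf=w$, where
\begin{equation*}
Lf=\int_0^TS(T-s)\mathcal{G}f(s)\,ds,\qquad w:=u_1-S(T)u_0\in L_0^2(\mathbb{T}^2),
\end{equation*}
with $L\colon L^2((0,T);L_0^2(\mathbb{T}^2))\to L_0^2(\mathbb{T}^2)$ bounded. Using $\mathcal{G}^\ast=\mathcal{G}$ and $S(t)^\ast=S(-t)$, its adjoint is $(L^\ast\psi)(s)=\mathcal{G}\,S(s-T)\psi$. For fixed $\psi$, set $\phi(s):=S(s-T)\psi$; then $\phi$ solves the free equation \eqref{linearKP-I}, $\phi(T)=\psi$, and $\|\phi(0)\|_{L^2}=\|\psi\|_{L^2}$ because $S$ is unitary. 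Consequently
\begin{equation*}
\|L^\ast\psi\|_{L^2((0,T);L^2)}^2=\int_0^T\|\mathcal{G}\phi(s)\|_{L^2}^2\,ds .
\end{equation*}

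Now I would invoke Theorem~\ref{observabilitytheorem} applied to the solution $\phi$: it gives $\|\psi\|_{L^2}^2=\|\phi(0)\|_{L^2}^2\le C_T\|L^\ast\psi\|_{L^2((0,T);L^2)}^2$, i.e. $L^\ast$ is bounded below. Equivalently, the Gramian $\Lambda:=LL^\ast$ is a bounded, self-adjoint, and coercive operator on $L_0^2(\mathbb{T}^2)$, since $\langle\Lambda\psi,\psi\rangle=\|L^\ast\psi\|^2\ge C_T^{-1}\|\psi\|^2$; by Lax--Milgram $\Lambda$ is invertible. Then $f:=L^\ast\Lambda^{-1}w=\mathcal{G}\,S(\,\cdot-T)\Lambda^{-1}w$ lies in $L^2((0,T);L^2(\mathbb{T}^2))$ and satisfies $Lf=LL^\ast\Lambda^{-1}w=w$, which is precisely $u|_{t=T}=u_1$; this $f$ is the HUM minimal-norm control.

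The genuine difficulty of the problem is the observability estimate \eqref{observabilityinequality}, which is already established in Theorem~\ref{observabilitytheorem}; once it is available, the present argument is a routine abstract duality and I do not expect any essential obstacle. The only points requiring care are bookkeeping ones: verifying skew-adjointness of $A$ so that $S(t)$ is unitary with $S(t)^\ast=S(-t)$, checking self-adjointness of $\mathcal{G}$ and that it maps $L_0^2(\mathbb{T}^2)$ into itself, and confirming that the inhomogeneous problem \eqref{control} is well posed at the $L^2$ level so that the mild solution and the Duhamel identity used above are justified.
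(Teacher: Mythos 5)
Your proposal is correct and follows exactly the route the paper intends: the paper derives Theorem~\ref{controltheorem} from Theorem~\ref{observabilitytheorem} by the HUM duality of Lions (citing \cite{JLions} and \cite{Rivas-Sun} rather than writing out the details), and your argument — skew-adjointness of the generator, self-adjointness of $\mathcal{G}$ and its mapping into $L_0^2(\mathbb{T}^2)$, the operator $L$ and its adjoint, coercivity of the Gramian $LL^\ast$ via the observability inequality, and the control $f=L^\ast\Lambda^{-1}w$ — is precisely that duality argument carried out in full.
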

\begin{remark}
	When the control region is a horizontal strip of the form $\omega=\mathbb{T}_x\times (a,b)_y$ and we put the control input as
	\begin{equation}
	\mathcal{G}_{\parallel}h(x,y):=g(y)\left(h(x,y)-\int_{\mathbb{T}}g(y')h(x,y')dy'\right).
	\end{equation}
	Then for any given time $T>0$, the similar observability \eqref{observabilityinequality}, replacing $\mathcal{G}u$ by $\mathcal{G}_{\parallel}u$, does not hold true. The argument is the same as the treatment for linear KP-II in \cite{Rivas-Sun}.
\end{remark}
The main part of this note is devoted to the proof of Theorem \ref{controltheorem}. In appendix, we discuss the validity of the observability for fractional linear KP I of the form

\begin{equation}\label{fKPIintro} \partial_tu-|D_x|^{\alpha}\partial_xu-\partial_x^{-1}\partial_y^2u=0.
\end{equation}
We will prove the following dichotomy result which asserts the least dispersion needed for the observability.
\begin{theorem}\label{dichotomy}
\begin{enumerate}
	\item If $\alpha\geq 1$, then for any $T>0$, there exists $C_T>0$, such that 
	$$ \|u(0)\|_{L^2(\mathbb{T}^2)}^2\leq C_T\int_0^T\int_{\mathbb{T}^2}|\mathcal{G}u(t,x,y)|^2dxdydt
	$$
	holds for any solution $u$ of \eqref{fKPIintro}.
	\item If $0<\alpha<1$, then for any $T>0$, there exist a sequence of solution $(u_n)$ of \eqref{fKPIintro}, such that
	$$ \displaystyle{\lim_{n\rightarrow\infty}\frac{\int_0^T\int_{\mathbb{T}^2}|\mathcal{G}u_n(t,x,y)|^2dtdxdy}{\|u_n(0)\|_{L^2(\mathbb{T}^2)}^2}}=0.
	$$ 
\end{enumerate}
\end{theorem}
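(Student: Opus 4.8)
The plan is to diagonalize the flow in the Fourier variables and reduce the two–dimensional observability to a family of one–dimensional problems indexed by the vertical frequency $l$. Writing $u(t,x,y)=\sum_{l\in\Z}u_l(t,x)e^{ily}$, both the evolution \eqref{fKPIintro} and the operator $\mathcal{G}=\mathcal{G}_{\pe}$ act block–diagonally in $l$: each slice solves $\partial_t u_l-|D_x|^{\alpha}\partial_x u_l+l^2\partial_x^{-1}u_l=0$, whose Fourier symbol is the dispersion relation
$$\omega_l(k)=\operatorname{sgn}(k)|k|^{\alpha+1}+\frac{l^2}{k},\qquad k\in\Z\setminus\{0\},$$
while $\mathcal{G}$ acts as $u_l\mapsto g\,(u_l-\langle g,u_l\rangle)$ purely in $x$. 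By Plancherel in $y$ it then suffices, for (1), to prove a one–dimensional observability estimate for $u_l$ with a constant uniform in $l$, and, for (2), to exhibit a single slice $l=l_n$ on which observability degenerates. The whole dichotomy is governed by the curvature of $\omega_l$ at its critical point $k_c=k_c(l)$, where $\partial_k\omega_l=0$, i.e. $(\alpha+1)|k_c|^{\alpha}=l^2/k_c^{2}$, so that $k_c\sim l^{2/(\alpha+2)}$ and a direct computation gives $\omega_l''(k_c)=(\alpha+1)(\alpha+2)|k_c|^{\alpha-1}\sim l^{2(\alpha-1)/(\alpha+2)}$.

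For part (1), $\alpha\geq 1$ forces $\omega_l''(k_c)\gtrsim 1$ uniformly in $l$, so the integers $k$ for which $\omega_l(k)$ lies within an $O(1)$ window of the extremum span a frequency interval of bounded length, whereas away from $k_c$ the one–sided gaps $\omega_l(k+1)-\omega_l(k)\sim(\alpha+1)|k|^{\alpha}$ tend to $+\infty$. I would therefore partition $\Z\setminus\{0\}$ into clusters of uniformly bounded cardinality with inter–cluster gaps diverging, and prove the slice estimate in two steps: time separates clusters, space separates within a cluster. Since the gaps between cluster frequencies diverge, a block version of Ingham's inequality makes the associated exponential system a Riesz sequence in $L^2(0,T)$ for every $T>0$; combined with the $x$–integration this reduces the observability to inverting $\mathcal{G}$ on each finite–dimensional cluster. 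On a cluster $\{k_1,\dots,k_m\}$ the relevant Gram matrix has entries $\widehat{g^2}(k_i-k_j)$, up to terms $O(\hat g(k))$ that vanish as $k\to\infty$; because the differences $k_i-k_j$ depend only on relative position and range over a finite set (bounded span), there are finitely many such matrices and each is invertible, giving a lower bound independent of $l$. Summing the uniform slice estimates over $l$ yields \eqref{observabilityinequality} for $\alpha\geq 1$.

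For part (2), $0<\alpha<1$ makes $\omega_l''(k_c)\to 0$ as $l\to\infty$: the dispersion degenerates exactly at the point of zero group velocity, so a wave packet built there neither translates nor spreads. Concretely, for $l=l_n\to\infty$ I would take
$$u_n(0,x,y)=e^{il_n y}\sum_{k}\chi\!\Big(\frac{k-k_c}{N_n}\Big)e^{ik(x-x_0)},\qquad x_0\notin\overline{(a,b)},$$
with $\chi$ a fixed cutoff and $N_n=(\,\omega_{l_n}''(k_c)\,T\,)^{-1/2}\to\infty$ chosen so that $N_n\ll k_c$. Expanding $\omega_{l_n}(k)$ about $k_c$ — the linear term vanishes, the quadratic phase produces a spreading of order $\omega''N_nT\sim N_n^{-1}$, and the cubic and higher terms are negligible under this scaling — the evolved packet stays concentrated in an $x$–interval of length $\sim N_n^{-1}\to 0$ about $x_0$ for all $t\in[0,T]$. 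Since $x_0$ keeps a fixed positive distance to $\operatorname{supp}g$ and the packet is Schwartz–localized, both $g\,u_n$ and $\langle g,u_n\rangle$ are superpolynomially small in $N_n$ uniformly on $[0,T]$, whereas $\|u_n(0)\|_{L^2}\sim N_n^{1/2}$; after normalization the observability ratio tends to $0$.

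The main obstacle is the uniformity in $l$ in part (1): as $l\to\infty$ the critical cluster sits at the high frequency $k_c\to\infty$, and I must ensure that neither the block–Ingham Riesz constant nor the cluster Gram lower bound deteriorates along this family. Both survive precisely because $\alpha\geq 1$ keeps the cluster span bounded and the inter–cluster gaps divergent uniformly in $l$, so that only finitely many cluster geometries occur and the residual $\hat g(k)$ corrections are asymptotically harmless; pinning this uniformity down quantitatively is the technical heart of the argument, and it is exactly what fails when $\alpha<1$.
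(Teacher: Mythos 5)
Your part (2) is sound and is in essence the paper's own counterexample: a wave packet concentrated at the critical frequency $k_c(l)$, whose curvature $\omega_l''(k_c)\sim l^{2(\alpha-1)/(\alpha+2)}$ degenerates exactly when $\alpha<1$. Your coherent-state scaling $N_n=(\omega_l''(k_c)T)^{-1/2}$ is a cleaner execution than the paper's (which takes a wider packet, lets it move at bounded speed, and kills the observation term by two integrations by parts after Poisson summation); the remaining details (centering at the nearest integer to $k_c$, the mean-subtraction term in $\mathcal{G}$) are routine and you address them.

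Part (1), however, has genuine gaps, and they sit precisely at what you call the technical heart. First, the claim that clusters have bounded span, so that the differences $k_i-k_j$ ``range over a finite set'' and only finitely many Gram matrices occur, is false: the dispersion relation has \emph{exact} resonances of arbitrarily large span. For $\alpha=1$ and $0<k_1<k_2$, $\omega_l(k_1)=\omega_l(k_2)$ is equivalent to $k_1k_2(k_1+k_2)=l^2$, which has the solutions $(k_1,k_2,l)=(5s^2,20s^2,50s^3)$, $s\in\mathbb{N}$; for instance $\omega_{50}(5)=\omega_{50}(20)=525$, a resonant pair of span $15s^2\to\infty$ straddling $k_c\approx(l^2/2)^{1/3}$. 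Consequently the functions $e^{i\omega_l(k)t}$ are not even linearly independent in $L^2(0,T)$, so the assertion that a block Ingham inequality makes them ``a Riesz sequence'' is also false: any correct argument must couple space and time from the outset (a vector-valued inequality whose cluster blocks are controlled by the Gram matrices $(\widehat{g^2}(k_i-k_j))$, whose uniform invertibility then requires Riemann--Lebesgue decay of $\widehat{g^2}$ plus strict positivity, or a Nazarov--Tur\'an type inequality --- not finiteness of configurations).

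Second, the clustering itself degenerates near $k_c$, and this is where $\alpha=1$ cannot be treated on a par with $\alpha>1$. If you cluster frequencies whose $\omega_l$-values lie within $\gamma\sim 1/T$ of each other, then near $k_c$ the consecutive gaps are $\sim\omega_l''\,|k-k_c|$, so all $k$ with $|k-k_c|\lesssim 1/(\omega_l''T)$ chain into one ``core'' cluster whose total spectral spread $\sim 1/(\omega_l''T^2)$ \emph{exceeds} its gap $\sim 1/T$ to the neighbouring clusters as soon as $\omega_l''T\lesssim 1$. No block-Ingham theorem applies to a cluster wider than its separation, and its cross terms with the adjacent (near-resonant, reflected) frequencies are exactly the enemy. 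For $\alpha>1$ this only afflicts finitely many $l$, since $\omega_l''(k_c)\to\infty$; but for $\alpha=1$ one has $\omega_l''(k_c)=6$ for \emph{every} $l$, so for small $T$ the fat core is present uniformly in $l$. This is precisely the point where the paper must import nontrivial machinery: for $\alpha>1$, a number-theoretic choice of the frequency cutoff (the lemma producing $\mu_1+\mu_2=2r_h \bmod \mathbb{Z}$) which makes the core/remainder cross terms $O(h^{\alpha-1})$ --- a gain that vanishes at $\alpha=1$ --- and, for $\alpha=1$, the uniform-in-$\sigma$ observability of the shifted Schr\"odinger group $e^{itA_0(D_x-\sigma)^2}$ (Burq--Zworski) together with a Duhamel perturbation for the cubic remainder. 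Your outline offers no substitute for this step: the uniformity you assert ``survives because $\alpha\geq1$ keeps the cluster span bounded and the gaps divergent'' rests on the two claims shown above to be false. (A smaller omission: reducing observability of $\mathcal{G}u$ to observability of $gu_l$ on each slice is not purely algebraic either; the paper handles the mean-subtraction term through the compactness--uniqueness reduction of Proposition 3.5 of Rivas--Sun.)
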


\section{Notations and Preliminaries}

\subsection{Notation}
We identify $\mathbb{T}^d=\mathbb{R}^d\slash(2\pi\mathbb{Z}^d)$ with fundamental domain $[-\pi,\pi]^d$. The Fourier transform on $\mathbb{T}^d$ is denoted by
$$ \widehat{f}(\xi)=(2\pi)^{-d}\int_{\mathbb{T}^d}f(z)e^{-iz\cdot\xi}dz, \quad \xi\in\mathbb{Z}^d.
$$ 
In the case where there is no risk of confusing, we will also use $\widehat{f}$ to note the Fourier transform of one variable. For the derivative, we sometimes use the notation $D_t=\frac{1}{i}\partial_t,D_x=\frac{1}{i}\partial_{x}$.

We will only use $L^2$ based norms for this linear problem, hence we denote by
$$ \|v\|:=\|v\|_{L^2(\mathbb{T}^d)},\|v\|_s:=\|v\|_{H^s(\mathbb{T}^d)},\|f\|_T:=\|f\|_{L^2(0,T;L^2(\mathbb{T}^d))}.
$$
We will also use the inner product notations
$$  (u,v):=\int_{\mathbb{T}^d}u(x)\cdot\ov{v}(x)dx,\quad (f,w)_T:=\int_0^T\int_{\mathbb{T}^d}f(t,x)\ov{w}(t,x)dxdt,
$$
where $d=1$ or $2$, which will be clear in the context.

 \subsubsection{Symbols and quantization on Torus}
 We briefly review the $h$ pseudo-differential calculus on Torus. For $m\in\mathbb{R}$, let $S^m$ be the set of $h$-dependent functions $a(x,\xi,h)$ with parameter $h\in(0,1)$ such that for any indices $\alpha,\beta$,
 \begin{equation}\label{decayofsymbol}
 \sup_{(x,\xi,h)\in\mathbb{R}^{2d}\times(0,1)}|\partial_x^{\alpha}\partial_{\xi}^{\beta}a(x,\xi,h)|\leq C_{\alpha,\beta}(1+|\xi|)^{m-|\beta|}.
 \end{equation} 
 
 For $a\in S^m$, we denote by $\mathrm{Op}_{h}(a)$ the $h$ pseudo-differential operator acting on Schwartz functions via
 $$ \mathrm{Op}_{h}(a)f(x):=\frac{1}{(2\pi h)^d}\int_{\mathbb{R}^{2d}}e^{\frac{i(x-y)\cdot\xi}{h}}a(x,\xi,h)f(y)dyd\xi.
 $$
 We refer \cite{booksemiclassical} for symbolic calculus and another basic properties about $h$ pseudo-differential operator. For functions on a compact Riemannian manifold,  we can define $h$ pseudo-differential operator by using local coordinate and partition of unity. On the torus, we can also use the global definition of pseudo-differential calculus. Denote by $S_{per}^m$ be symbols in $S^m(\mathbb{R}^{2d})$ which are $2\pi$-periodic in $x\in\mathbb{R}^d$, namely
 $$ a(x+2\pi k,\xi)=a(x,\xi),\quad\forall (x,\xi)\in\mathbb{R}^{2d},k\in\mathbb{Z}^d.
 $$  
 Symbols in $S_{per}^m$ can depend on $h$ with uniform estimate \eqref{decayofsymbol},though the dependence is not displayed in our notation.  
 We quantize $a\in S_{per}^m$ as an operator on $\mathcal{S}'(\mathbb{T}^d)$ via the formula
 \begin{equation}\label{quantization1}
 \mathrm{Op}_h(a)f(x):=\sum_{k\in\mathbb{Z}^d}\frac{1}{(2\pi h)^d}\int_{[-\pi,\pi]^d}\int_{\mathbb{R}^d}a(x,\xi)e^{\frac{i(x-y+2k\pi)\cdot\xi}{h}}f(y)dyd\xi
 \end{equation}
 From Poisson summation formula, we have
 \begin{equation}\label{quantization2}
 \mathrm{Op}_h(a)f(x)=\sum_{k\in\mathbb{Z}^d}a(x,h_1k)\widehat{f}(k)e^{ik\cdot x}.
 \end{equation}
 The globally defined quantization via \eqref{quantization2} is equivalence to (modulo $hS^{m-1}$) the usual definition via partition of unity, see the exercise in the book \cite{SA-PG}.

\subsection{Quick review of 1D semi-classical reduction}

	Expanding the solution $u(t,x,y)$ to \eqref{linearKP-I} in Fourier series in $y$ variable
	$$ u(t,x,y)=\sum_{l\in\mathbb{Z}}a_{l}(t,x)e^{ily},
	$$
	we find that for each $l\in\mathbb{Z}$, $a_l$ satisfies the equation
	$$ \partial_ta_l+\partial_x^3a_l-l^2\partial_x^{-1}a_l=0
	$$
	Therefore, by changing notations, the equation \eqref{linearKP-I} can be reduced to the study of the following $\lambda$ dependent equation
	\begin{align} \label{1Dequation}
	\begin{cases}    \partial_tu+\partial_x^3u+\lambda^2\partial_x^{-1}u=0,\quad
	(t,x)\in\R\times\mathbb{T},
	\\
	u|_{t=0}=u_0\in L_0^2(\mathbb{T}),
	\end{cases}
	\end{align}
We take $\lambda=\frac{1}{h^2}$ and rewrite \eqref{1Dequation} as
\begin{align} \label{1Dsemiequation}
\begin{cases}    h^3D_tu-(hD_x)^3u-(hD_x)^{-1}u=0,\quad
(t,x)\in\R\times\mathbb{T},
\\
u|_{t=0}=u_0\in L_0^2(\mathbb{T}),
\end{cases}
\end{align}
The solution $u$ depends on the parameter $h$ and we will drop the dependence in the sequel. From the same proof of Proposition 3.5 in \cite{Rivas-Sun}, we reduce the proof of Theorem \ref{observabilitytheorem} to the following weak observability.
\begin{theorem}\label{reduction1}
$T>0$ be given. There exist a constant $C_T>0$ and a sufficiently small number $h_0>0$, such that for all $h\in(0,h_0)$, the solution $u$ of the $h$ dependent equation \eqref{1Dsemiequation} satisfies
	\begin{equation}\label{observabilityuniform1}
	\|u_0\|^2\leq C_T\int_0^T\int_{\mathbb{T}}|g(x)u(t,x)|^2dxdt+C_T\|u_0\|_{-1}^2.
	\end{equation}
\end{theorem}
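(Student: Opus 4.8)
The plan is to argue by contradiction using semiclassical defect measures, the genuinely new feature being a second microlocalization at the frequencies where the group velocity degenerates. Write the semiclassical symbol of the evolution as $p(\xi)=\xi^3+\xi^{-1}$, so that $\widehat{u}(t,k)=e^{itp(h k)/h^3}\widehat{u}_0(k)$ and the associated Hamiltonian flow on $T^*\mathbb{T}$ is $(x,\xi)\mapsto(x+sp'(\xi),\xi)$ with $p'(\xi)=3\xi^2-\xi^{-2}$. Negating \eqref{observabilityuniform1} produces a sequence $h_n\to 0$ and solutions $u_n$ of \eqref{1Dsemiequation} (with $h=h_n$) such that $\|u_{0,n}\|=1$ while $\int_0^T\int_{\mathbb{T}}|g u_n|^2\,dx\,dt\to 0$ and $\|u_{0,n}\|_{-1}\to 0$.

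First I would dispose of the frequencies that do not sit at the semiclassical scale. The hypothesis $\|u_{0,n}\|_{-1}\to0$ kills all mass carried by bounded $x$-frequencies, while on the ranges $1\ll k\ll h_n^{-1}$ and $k\gg h_n^{-1}$ the group velocity $p'(h_nk)$ is large (dominated by $\xi^{-2}$ near $\xi=0$ and by $\xi^2$ near $\xi=\infty$, respectively), so that the corresponding wave packets sweep across $\mathrm{supp}(g)$ in a time much shorter than $T$; a direct propagation estimate then shows this part of the mass is controlled by the observation term and disappears in the limit. This reduces matters to data concentrated at $k\sim h_n^{-1}$, i.e. at semiclassical frequency $\xi=h_nk$ of order one, for which a semiclassical defect measure $\mu$ on $\mathbb{T}_x\times(\mathbb{R}\setminus\{0\})_\xi$ can be extracted with total mass $1$. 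Relative to the analysis at the critical frequency below, this multiscale bookkeeping is the soft part of the argument.

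Next I would run the standard propagation argument for $\mu$. Since $T/h_n^3\to\infty$, the measure is invariant under the flow $(x,\xi)\mapsto(x+sp'(\xi),\xi)$ for every $s\in\mathbb{R}$; on the set $\{p'(\xi)\neq0\}$ this forces $\mu$ to be uniform in $x$, and the vanishing of the observation, i.e. $\int g(x)^2\,d\mu=0$ together with $\int_{\mathbb{T}}g^2>0$, then yields $\mu\equiv0$ there. The only frequencies that can survive are the two critical points $\xi=\pm\xi_0$, $\xi_0=3^{-1/4}$, of $p$, where $p'(\xi_0)=0$; both are nondegenerate since $p''(\xi_0)=6\xi_0+2\xi_0^{-3}=12\cdot3^{-1/4}\neq0$ (and likewise at $-\xi_0$).

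The main obstacle, and the point that distinguishes KP-I from the KP-II treatment of \cite{Rivas-Sun} where $p'$ never vanishes, is to rule out mass hiding at $\xi=\pm\xi_0$. Here I would second-microlocalize around $\xi_0$: the Taylor expansion $p(\xi)=p(\xi_0)+\tfrac12p''(\xi_0)(\xi-\xi_0)^2+O((\xi-\xi_0)^3)$ shows that, after removing the overall oscillation $e^{itp(\xi_0)/h_n^3}$ and rescaling time, the reduced long-time dynamics is governed by the constant-coefficient Schrödinger operator $\tfrac12p''(\xi_0)D_x^2$ on $\mathbb{T}_x$, and the associated two-microlocal measure propagates along its flow. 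Invoking the observability of the Schrödinger equation on the torus from an arbitrary nonempty open set in arbitrary positive time (Jaffard, Burq--Zworski), the vanishing observation forces this two-microlocal measure to vanish as well. Hence $\mu\equiv0$ everywhere, contradicting $\|\mu\|=1$, and \eqref{observabilityuniform1} follows. The delicate part is exactly the construction of the two-microlocal measure at the degenerate frequency, the derivation of its propagation law, and the verification that the effective dynamics is precisely the torus Schrödinger flow so that its observability can be transferred.
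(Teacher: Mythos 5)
Your strategy is genuinely different from the paper's. The paper never argues by compactness: it proves the frequency-localized estimate of Proposition \ref{reduction2} directly and quantitatively, and at the degenerate frequencies $\pm\xi_0$ it does not use Schr\"odinger observability at all. Instead, after the normal form \eqref{toymodel}, it splits the solution into a finite cluster of modes around the critical point, controlled by the spectral inequality \eqref{spectral}, plus the remaining modes, controlled by rescaled propagation (Lemma \ref{highfrequencies}), and then kills the cross term between the two pieces by a resonance-gap estimate whose key input is the arithmetic lemma \eqref{modulo} (choice of the truncation so that $\mu_1+\mu_2=2r_h$ mod $\mathbb{Z}$). Your defect-measure/second-microlocal route replaces all of this with soft analysis plus observability of a torus Schr\"odinger group; it is viable in principle, and its core mechanism (effective quadratic dynamics at the critical frequency transferred to Schr\"odinger observability) is in fact exactly what the paper itself does, in quantitative Duhamel form, in the appendix for the borderline case $\alpha=1$. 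What the paper's route buys is explicit constants and no compactness; what yours buys is less computation, at the price of genuinely heavy two-microlocal machinery.

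There are, however, two concrete gaps. First, the effective operator at $\xi_0$ is not $\tfrac12 p''(\xi_0)D_x^2$. Since $\xi_0/h\notin\mathbb{Z}$, the best one can do is conjugate by $e^{i\lfloor\xi_0/h\rfloor x}$, and the quadratic phase that remains is $A_0(m-r_h)^2$ with $A_0=\tfrac12\phi''(\xi_0)$ and $r_h=\{\xi_0/h\}\in[0,1)$; so the limiting dynamics (along a subsequence $r_{h_n}\to r_*$) is the shifted group $e^{isA_0(D_x-r_*)^2}$, not the plain torus Schr\"odinger group, and Jaffard-type observability is not the statement you need. When $2r_*$ is an integer the eigenvalues $(m-r_*)^2$ collide in pairs $m_1+m_2=2r_*$, so Ingham-type gap arguments fail; observability uniform in the shift still holds, but this is precisely Lemma 2.4 of \cite{Burq-Zworski}, which must be invoked (it is what the paper cites for $\alpha=1$). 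This shift and its near-resonances are not a technicality: they are exactly the KP-I--specific difficulty that the paper's lemma \eqref{modulo} and the relation \eqref{relation} are designed to handle, and your sketch erases them by writing $D_x^2$. Second, your second microlocalization as described only governs the part of the solution at distance $O(h)$ from $\xi_0$ (bounded second frequency), and even there only after rescaling time to intervals of length $O(h)$, since the cubic term $\phi'''(\xi_0)\neq0$ contributes a phase of order $T|m|^3$ over the full interval $[0,T]$. The modes with $h\ll|hk-\xi_0|\ll1$ contribute to the same atom of $\mu$ at $\xi_0$ but escape every compact set in the second frequency; a priori all the mass could sit there (say at $|hk-\xi_0|\sim h^{1/2}$), and neither your first-stage averaging on $\{p'\neq0\}$ nor the Schr\"odinger reduction sees it. This "measure at infinity" needs its own argument---either the transport-plus-averaging law for the non-compact part of the two-microlocal measure, or the dyadic rescaled propagation of the paper's Lemma \ref{highfrequencies}. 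Both gaps are fixable by known techniques, but as written the proposal misidentifies the limiting dynamics and does not reach a contradiction for all possible concentration profiles, so it does not yet prove Proposition \ref{final}'s content.
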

We use a standard homogeneous Littlewood-Paley decomposition. Take $\psi\in C_c^{\infty}(\mathbb{R})$ with support supp$\psi\subset \{1/2\leq |\xi|\leq 2\}$ and $\psi_n\in C_c^{\infty}(\mathbb{R})$ such that
$$ \sum_{n\in\mathbb{Z}}\psi_n(\xi)=1,\quad\forall\xi\neq 0,
$$
where $\psi_n(\xi)=\psi(2^{n}\xi)$. With this notation, 
we further reduce the proof of Theorem \ref{reduction1} to the following frequency-localized estimate.
\begin{proposition}\label{reduction2}
	Let $T>0$ and $\epsilon_0>0$ be given. There exist $h_0>0,$ small and $C_0=C_0(\epsilon_0,T)>0$ such that for all $n\in\mathbb{Z}$ which subject to $2^nh\leq\epsilon_0$,
	\begin{equation}\label{singlefrequency}
	\|\psi_n(hD_x)u(0)\|^2\leq C_0\int_0^{T_0}\int_{\mathbb{T}}|g(x)\psi_n(hD_x)u(t,x)|^2dxdt
	\end{equation}
	holds true for all solutions $u(t,x)$ of \eqref{1Dsemiequation}.
\end{proposition}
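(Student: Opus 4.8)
The plan is to argue by contradiction using semiclassical defect measures. Suppose \eqref{singlefrequency} fails with no uniform constant; then there are $h_k\to 0$, integers $n_k$ with $2^{n_k}h_k\le\epsilon_0$, and solutions $u_k$ of \eqref{1Dsemiequation} such that, writing $v_k:=\psi_{n_k}(h_kD_x)u_k$, one has $\|v_k(0)\|=1$ while $\int_0^{T_0}\|g\,v_k(t)\|^2\,dt\to 0$. Since the equation is a Fourier multiplier in $x$, $v_k$ again solves \eqref{1Dsemiequation} and is spectrally localized to $|h_kD_x|\in[\sigma_k/2,2\sigma_k]$ with $\sigma_k:=2^{-n_k}$. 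Rescaling the frequency variable by $\sigma_k$ (equivalently, working with the reduced parameter $h_k/\sigma_k\le\epsilon_0$) I would normalize the localization to $|\xi|\in[1/2,2]$: this keeps the mass on a fixed compact frequency annulus, so no mass escapes to $\xi=0$ or $\xi=\infty$, and it isolates the symbol $p(\xi)=\xi^3+\xi^{-1}$, which is smooth there because $\psi$ is supported away from the origin. After passing to a subsequence I extract a semiclassical defect measure $\nu$ on $[0,T_0]\times\mathbb{T}_x\times\mathbb{R}_\xi$ associated to $(v_k)$.

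The crucial point is the time scale. Rewriting \eqref{1Dsemiequation} as $ih\,\partial_s v_k=\mathrm{Op}_h(p)v_k$ with $s=-t/h_k^2$, the physical interval $t\in[0,T_0]$ corresponds to a flow time $|s|\lesssim T_0/h_k^2\to\infty$. Consequently $\nu$ is invariant under the full Hamiltonian flow $\phi_s(x,\xi)=(x+s\,p'(\xi),\xi)$, $s\in\mathbb{R}$, of $p$, since every fixed $s$ is attained inside $[0,T_0]$ as $k\to\infty$. On the set where $p'(\xi)=3\xi^2-\xi^{-2}\ne 0$ the flow translates $x$ at nonzero speed, so invariance under all $s$ forces $\nu$ to be independent of $x$ there. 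The vanishing hypothesis reads $\int_0^{T_0}\!\int g(x)^2\,d\nu_t\,dt=0$, so $\nu$ charges no point with $x$ in the nonempty open set $\{g\ne 0\}$; combined with $x$-independence this yields $\nu\equiv 0$ on $\{p'(\xi)\ne 0\}$. Hence $\nu$ can only concentrate on the degenerate frequencies $\xi=\pm\xi_0$, $\xi_0=3^{-1/4}$, where the group velocity vanishes.

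The main obstacle is to rule out concentration at $\xi=\pm\xi_0$, which is exactly the feature distinguishing KP-I from the KP-II setting of \cite{Rivas-Sun}, where $p'$ never vanishes. Here I would exploit the nondegeneracy $p''(\pm\xi_0)\ne 0$ through a second microlocalization near $\xi_0$. After removing the constant phase $p(\xi_0)/h^3$ and modulating out the first-order part, the Taylor expansion $p(\xi)=p(\xi_0)+\tfrac12 p''(\xi_0)(\xi-\xi_0)^2+O(|\xi-\xi_0|^3)$ reduces the tangential dynamics at $\xi_0$ to a semiclassical Schr\"odinger equation $ih\,\partial_\tau w=\tfrac12 p''(\xi_0)(hD_x)^2w$, run over the long time $\tau\in[0,T_0/h^2]$. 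The associated two-microlocal measure then propagates under the Schr\"odinger flow on $\mathbb{T}$, which over such long times equidistributes in $x$; since semiclassical Schr\"odinger on the torus is observable from any nonempty open set, its two-microlocal mass over $\{g\ne 0\}$ can vanish only if that mass is zero. This forces the $\xi_0$-component of $\nu$ to vanish as well.

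Combining the two steps gives $\nu\equiv 0$, whence $\|v_k(0)\|\to 0$, contradicting $\|v_k(0)\|=1$; this yields \eqref{singlefrequency} with a constant $C_0$ uniform in $n$ (subject to $2^nh\le\epsilon_0$) and in $h<h_0$. I expect the genuinely delicate step to be the second microlocalization at $\pm\xi_0$: one must justify the normal form reducing $p$ to its quadratic part, control the cubic remainder over the long time scale $T_0/h^2$, and set up the two-microlocal measure at the correct intermediate scale so that its evolution is governed precisely by the Schr\"odinger flow.
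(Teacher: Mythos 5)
Your first step is sound, and it is a genuinely different (softer) route than the paper's for the non-critical regimes: where the paper proves quantitative propagation estimates via Egorov plus sharp G\aa rding (Proposition \ref{highfrequency}) and then sums over rescaled time windows, you extract a time-averaged semiclassical measure, observe that the physical window $[0,T_0]$ corresponds to Hamiltonian times $s=t/h^2$ of size $T_0/h^2\to\infty$, and conclude invariance of $\nu_t$ under the whole flow, hence $x$-independence and vanishing of $\nu$ on $\{p'\neq 0\}$. Modulo routine care with the rescaled symbol (after normalizing the annulus it is $\xi^3+\epsilon^4\xi^{-1}$ or $\xi^{-1}+\epsilon^4\xi^3$ with $\epsilon=2^{\mp n_k}$, not $\xi^3+\xi^{-1}$, unless $n_k$ stays bounded; a subsequence extraction fixes this), that part can be made rigorous.

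The gap is in the treatment of the critical frequencies $\pm\xi_0$, and it is fatal to the proposed route rather than merely ``delicate''. After the paper's reduction \eqref{toymodel}, the equation reads $D_tw=h^{-3}\Phi(hD_x)w$ with $\Phi(\xi)=A_0(\xi-\sigma_h)^2+O(|\xi-\sigma_h|^3)$, so on the Fourier mode $k$ the generator is $A_0h^{-1}(k-r_h)^2+O(|k-r_h|^3)$: the cubic remainder is of size $O(1)$ already on a bounded cluster $|k-r_h|\le R$, and of size $\delta^3/h^3$ at the edge of the band $|hk-\xi_0|\le\delta$. Over a physical time interval of fixed length it therefore produces phase errors that are at best $O(T)$ and at worst $T\delta^3/h^3\to\infty$; no Duhamel or normal-form argument can reduce the dynamics to the exact (sped-up) Schr\"odinger group, not even on bounded clusters. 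It is instructive that the paper's appendix uses precisely your reduction, but only for $\alpha=1$: there the generator is $h^{-2}\Phi_1(hD_x)$, and after the extra $h^{1/3}$-localization the remainder is $O_{L^2\to L^2}((A\delta)^3)$, uniformly small, so the Burq--Zworski Schr\"odinger observability applies; for $\alpha=2$ that computation fails, which is exactly why Proposition \ref{final} is proved by a different mechanism.

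Second, even granting an effective quadratic model, your soft argument at $\xi_0$ has two holes. ``Equidistribution in $x$'' is false for the Schr\"odinger flow on $\T$ (any single Fourier mode is stationary in modulus); what you want to quote is a spectral statement, and on the bounded cluster the paper gets it with no dynamics at all, by applying the spectral inequality \eqref{spectral} at each fixed time. More importantly, nothing in your argument decouples the cluster from the rest of the band inside the observability functional: one must control $\int_0^T\int g^2|w_L|^2$ by $\int_0^T\int g^2|w|^2$ up to errors vanishing as $h\to 0$, and the off-diagonal terms $\int_0^T\int g^2 w_L\ov{w_H}$ contain near-resonant pairs $k_1,k_2$ on opposite sides of $r_h$ with $\Phi(hk_1)\approx\Phi(hk_2)$, which neither oscillate in time nor become small as $h\to0$ from the decay of $\widehat{g^2}$ (for fixed $n_0$ that decay gives at best $O(n_0^{-2})$, which does not vanish with $h$ and is overwhelmed by the factor $\kappa(n_0+2)$ in the final absorption). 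In a two-microlocal limit these pairs survive as off-diagonal entries of the limiting operator-valued measure, so the measure-theoretic argument cannot conclude. The paper's way around this is its key device: the arithmetic choice \eqref{modulo}--\eqref{relation} of the cut $|\Phi(hk)|\le h^2M_0^2$, placed so that no near-resonant pair straddles it, after which the explicit oscillatory sum yields cross terms $O(h)\|w_0\|^2$. Your proposal contains no substitute for this step, and without one the concentration at $\pm\xi_0$ cannot be ruled out.
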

The derivation from Proposition \ref{reduction2} to Theorem \ref{reduction1} is simple and can be found in  \cite{Rivas-Sun}.
The remaining part of this note is devoted to the proof of \eqref{singlefrequency}. We summarize the path of the proof as follows:
\begin{itemize}
\item Regimes $n\geq N_0$ and $n\leq -N_0$: $n\leq -N_0$ corresponds to the very low frequency regime in which the term $(hD_x)^{-1}$ dominates the dispersion. $n\geq N_0$ corresponds to the very high frequency regime in which the term $(hD_x)^{3}$ dominates the dispersion. The arguments are similar as for linear KP-II.
\item Regime $|n|\leq N_0$: This is the essential  difference between KP-I and KP-II. The group velocity of KP-I could be very small in this regime. 
\end{itemize}

\section{The proof of Proposition \ref{reduction2}}

\subsection{Regimes far from critical points}
Let us consider the following $\epsilon-$dependence symbols:
$$p_{\epsilon}(x,\xi)=\left(\frac{\epsilon^4}{\xi}+\xi^3\right)\chi(\xi),\quad q_{\epsilon}(x,\xi)=\left(\frac{1}{\xi}+\epsilon^4\xi^3\right)\chi(\xi),
$$
where $\chi\in C_c^{\infty}(\mathbb{R})$ with supp$(\chi)\subset\{\mu<|\xi|<\nu\}$ for some $0<\mu<\frac{1}{2}$, $\nu>2$ and $\chi\equiv 1$ in a neighborhood of $\{1/2\leq|\xi|\leq 2\}$. Denote by $P_{\epsilon}=\mathrm{Op}_{\widetilde{h}}(p_{\epsilon})$ and $Q_{\epsilon}=\mathrm{Op}_{\widetilde{h}}(q_{\epsilon})$.We use the notations $U_{\epsilon}(t),V_{\epsilon}(t)$  to represent solution operators to the following two equations
\begin{align} \label{solutionoperator1}
\begin{cases}    \frac{\widetilde{h}}{i}\partial_tU_{\epsilon}(t)+U_{\epsilon}(t)P_{\epsilon}=0,
\\
U_{\epsilon}(0)=I,
\end{cases}
\end{align}
\begin{align} \label{solutionoperator2}
\begin{cases}    \frac{\widetilde{h}}{i}\partial_tV_{\epsilon}(t)+V_{\epsilon}(t)Q_{\epsilon}=0,
\\
V_{\epsilon}(0)=I
\end{cases}
\end{align}
The flows associated to the vector fields $H_{p_{\epsilon}},H_{q_{\epsilon}}$ are explicitly given by
$$ \phi_{\epsilon,t}(x_0,\xi_0)=\left(x_0+\left(
-\frac{\epsilon^4}{\xi_0^2}+3\xi_0^2\right)\chi(\xi_0)t+\left(\frac{\epsilon^4}{\xi_0}+\xi_0^3\right)\chi'(\xi_0)t,\xi_0\right),
$$
$$\varphi_{\epsilon,t}(x_0,\xi_0)=\left(x_0+\left(-\frac{1}{\xi_0^2}+3\epsilon^4\xi_0^2\right)\chi(\xi_0)t+\left(\frac{1}{\xi_0}+\epsilon^4\xi_0^3\right)\chi'(\xi_0)t,\xi_0\right)
$$
with respectively.

From Egorov's theorem (see \cite{booksemiclassical}), we know that for any symbol $a(x,\xi)\in C_c^{\infty}(T^*M)$, 
$$ U_{\epsilon}(-t)\mathrm{Op}_{\widetilde{h}}(a)U_{\epsilon}(t)=\mathrm{Op}_{\widetilde{h}}(a\circ\phi_{\epsilon,t})+O_{L^2\rightarrow L^2}(\widetilde{h}),
$$
$$V_{\epsilon}(-t)\mathrm{Op}_{\widetilde{h}}(a)V_{\epsilon}(t)=\mathrm{Op}_{\widetilde{h}}(a\circ\varphi_{\epsilon,t})+O_{L^2\rightarrow L^2}(\widetilde{h}).
$$
We remark that the bound $O_{L^2\rightarrow L^2}(\widetilde{h})$ is independent of $\epsilon\leq 1$ since all the semi-norms of the symbol $p_{\epsilon},q_{\epsilon}$ can be chosen continuously depending on $\epsilon$. \\

Now we prove the following localized observability estimates:
\begin{proposition}\label{highfrequency}
	There exists $C_0>0,T_0>0,\widetilde{h}_0>0$ and $\delta_0>0$ such that for all $u_0\in L_0^2(\mathbb{T})$, and all $\widetilde{h}\leq  \widetilde{h}_0$
	\begin{equation}\label{veryhigh}
	\|\psi(\widetilde{h}D_x)u_0\|^2\leq C_0\int_0^{T_0}\|gU_{\epsilon}(t)\psi(\widetilde{h}D_x)u_0\|^2dt,
	\end{equation}
	\begin{equation}\label{lesshigh}
	\|\psi(\widetilde{h}D_x)u_0\|^2\leq C_0\int_0^{T_0}\|gV_{\epsilon}(t)\psi(\widetilde{h}D_x)u_0\|^2dt,
	\end{equation}
uniformly in $\epsilon<\delta_0$.
\end{proposition}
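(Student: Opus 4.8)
The plan is to prove \eqref{veryhigh} and \eqref{lesshigh} by the standard semiclassical observability scheme: propagate the multiplier $g^2$ by Egorov's theorem, average it in time along the Hamiltonian flow, and convert the resulting lower bound on symbols into a lower bound on operators via the sharp G\aa{}rding inequality. The two estimates are identical in structure, so I describe $U_{\epsilon}$ and indicate the only difference for $V_{\epsilon}$ at the end. The first remark is that $p_{\epsilon}$ and $q_{\epsilon}$ depend only on $\xi$ and are real, so $P_{\epsilon}=\mathrm{Op}_{\widetilde{h}}(p_{\epsilon})$ and $Q_{\epsilon}$ are self-adjoint Fourier multipliers; hence $U_{\epsilon}(t)=e^{-itP_{\epsilon}/\widetilde{h}}$ and $V_{\epsilon}(t)$ are unitary and commute with $\psi(\widetilde{h}D_x)$. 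Writing $u_0=\psi(\widetilde{h}D_x)u_0$ and using $U_{\epsilon}(t)^*=U_{\epsilon}(-t)$,
\begin{equation*}
\int_0^{T_0}\|gU_{\epsilon}(t)u_0\|^2\,dt=\Big(\Big(\int_0^{T_0}U_{\epsilon}(-t)\,\mathrm{Op}_{\widetilde{h}}(a)\,U_{\epsilon}(t)\,dt\Big)u_0,\,u_0\Big),
\end{equation*}
where, since $u_0$ (and hence $U_{\epsilon}(t)u_0$) is localized in $\mathrm{supp}\,\psi$, the multiplication by $g^2$ may be replaced, up to $O(\widetilde{h})$, by a compactly supported symbol $a(x,\xi)=g^2(x)\widetilde\psi(\xi)^2$ with $\widetilde\psi\equiv1$ on $\mathrm{supp}\,\psi$ and $\mathrm{supp}\,\widetilde\psi\subset\{\chi\equiv1\}$.

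The crucial step is a dynamical lower bound. On $\mathrm{supp}\,\widetilde\psi$ one has $\chi\equiv1$, $\chi'\equiv0$, so the explicit flow from the excerpt collapses to the linear transport $\phi_{\epsilon,t}(x_0,\xi_0)=(x_0+v_{\epsilon}(\xi_0)t,\xi_0)$ with $v_{\epsilon}(\xi)=3\xi^2-\epsilon^4/\xi^2$. For $1/2\le|\xi|\le2$ and $\epsilon<\delta_0$ small, $v_{\epsilon}(\xi)\ge 3/4-4\delta_0^4\ge c_0>0$ uniformly in $\epsilon$, and $v_{\epsilon}$ is also bounded above; thus choosing $T_0$ with $c_0T_0\ge2\pi$ forces each trajectory $t\mapsto x_0+v_{\epsilon}(\xi_0)t$ to sweep the whole circle. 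A change of variables then gives, for all $(x,\xi)$ with $\xi\in\mathrm{supp}\,\psi$,
\begin{equation*}
G_{\epsilon,T_0}(x,\xi):=\int_0^{T_0}a\big(\phi_{\epsilon,t}(x,\xi)\big)\,dt\ge \frac{1}{\sup_{1/2\le|\xi|\le2}v_{\epsilon}(\xi)}\int_{\mathbb{T}}g^2\,dy=:c>0,
\end{equation*}
uniformly in $\epsilon<\delta_0$; here $\int_{\mathbb{T}}g^2>0$ because $\int_{\mathbb{T}}g=1$.

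Finally I quantize. Egorov's theorem, applied to the compactly supported symbol $a$, yields
\begin{equation*}
\int_0^{T_0}U_{\epsilon}(-t)\,\mathrm{Op}_{\widetilde{h}}(a)\,U_{\epsilon}(t)\,dt=\mathrm{Op}_{\widetilde{h}}(G_{\epsilon,T_0})+O_{L^2\rightarrow L^2}(\widetilde{h}),
\end{equation*}
with the $O(\widetilde{h})$ remainder uniform in $\epsilon$, as recorded after the Egorov statement. Since $G_{\epsilon,T_0}-c\,\widetilde\psi^2\ge0$, the sharp G\aa{}rding inequality together with $\widetilde\psi(\widetilde{h}D_x)u_0=u_0$ gives $(\mathrm{Op}_{\widetilde{h}}(G_{\epsilon,T_0})u_0,u_0)\ge(c-C\widetilde{h})\|u_0\|^2$, so that
\begin{equation*}
\int_0^{T_0}\|gU_{\epsilon}(t)u_0\|^2\,dt\ge(c-C\widetilde{h})\|\psi(\widetilde{h}D_x)u_0\|^2.
\end{equation*}
Choosing $\widetilde{h}_0$ with $C\widetilde{h}_0\le c/2$ proves \eqref{veryhigh} with $C_0=2/c$, uniformly in $\epsilon<\delta_0$. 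The estimate \eqref{lesshigh} is identical, the only change being that the group velocity $-1/\xi^2+3\epsilon^4\xi^2$ of $\varphi_{\epsilon,t}$ is negative and bounded away from $0$ on the band, so trajectories sweep the circle in the opposite direction. I expect the main difficulty to be bookkeeping the uniformity in $\epsilon$ as $\epsilon\to0$: everything hinges on the band $\mathrm{supp}\,\psi$ sitting away from the zeros of the group velocity, so that the small $\epsilon^4$ perturbation cannot destroy the lower bound $|v_{\epsilon}|\ge c_0$, and on the uniform Egorov remainder.
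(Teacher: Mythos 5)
Your proof is correct and follows essentially the same path as the paper's: quantize $a=g^2\widetilde{\psi}$ (or $\widetilde{\psi}^2$), propagate it with the uniform Egorov theorem, bound the time-averaged symbol below using that the group velocity $3\xi^2-\epsilon^4/\xi^2$ is bounded away from zero on $\mathrm{supp}\,\widetilde{\psi}$ for $\epsilon<\delta_0$, and conclude via sharp G\aa{}rding after absorbing the $O(\widetilde{h})$ error. Your explicit sweep-the-circle argument (choosing $c_0T_0\geq 2\pi$ and changing variables to get the lower bound $\int_{\mathbb{T}}g^2/\sup v_{\epsilon}$) is simply a spelled-out justification of the lower bound $\int_0^{T_0}a\circ\phi_{\epsilon,t}\,dt\geq c_1$ that the paper asserts without detail.
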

\begin{proof}
	Here we only prove the first inequality, and the second one will follow in the same manner.
	Consider the symbol $a(x,\xi)=g(x)^2\widetilde{\psi}(\xi)$ (strictly speaking, $g$ is not smooth and we need approximate it by smoothing functions) and its quantization $\mathrm{Op}_{\widetilde{h}}(a)=(g(x))^2\widetilde{\psi}(\widetilde{h}D_x)$, where $\widetilde{\psi}$ is a slight enlargement of $\psi$ so that $\widetilde{\psi}\psi=\psi$ and supp$\chi|_{\mathrm{supp }(\widetilde{\psi})}=1$. From Egorov's theorem, we have
	$$ U_{\epsilon}(-t)\mathrm{Op}_{\widetilde{h}}(a)U_{\epsilon}(t)=\mathrm{Op}_{\widetilde{h}}(a\circ\phi_{\epsilon,t})+O_{L^2\rightarrow L^2}(\widetilde{h}),\textrm{ uniformly in } \epsilon\leq 1.
	$$
	Note that on the support of $a$, $\chi'(\xi)=0$, and thus we have $$\varphi_{\epsilon,t}(x_0,\xi_0)=\left(x_0+\left(-\frac{\epsilon^4}{\xi_0^2}+3\xi_0^2\right)t,\xi_0\right).
	$$
We choose $\delta_0=\delta_0(\mu,\nu)$, sufficiently small, such that 
 $\left|-\frac{\epsilon^4}{\xi_0^2}+3\xi_0^2\right|\geq c_0>0$, uniformly in $\epsilon<\delta_0$ on the $\xi-$support of $\widetilde{\psi}$. Therefore, for some $T_0=T_0(c_0)>0$, and $c_1>0$ , we have
	$$ \int_0^{T_0}a\circ\phi_{\epsilon,t}dt\geq c_1>0.
	$$
	Now we calculate
	\begin{equation*}
	\begin{split}
	&\int_0^{T_0}\|gU_{\epsilon}(t)\psi(\widetilde{h}D_x)u_0\|^2dt\\=&\int_0^{T_0}\left(gU_{\epsilon}(t)\psi(\widetilde{h}D_x)u_0,gU_{\epsilon}(t)\widetilde{\psi}(\widetilde{h}D_x)\psi(\widetilde{h}D_x)u_0\right)dt\\
	=&\int_0^{T_0}\left(U_{\epsilon}(-t)\widetilde{\psi}(\widetilde{h}D_x)g^2U_{\epsilon}(t)u_0,\psi(\widetilde{h}D_x)u_0\right)dt\\
	=&\left(\mathrm{Op}_{\widetilde{h}}(b_{T_0})\psi(\widetilde{h}D_x)u_0,\psi(\widetilde{h}D_x)u_0\right),
	\end{split}
	\end{equation*}
	with $b_{T_0}(x,\xi)=\int_0^{T_0}a\circ\phi_{\epsilon,t}dt$ modulo $\widetilde{h}S^0$. Thus, from Sharp G\aa rding inequality, 
	$$\left(\mathrm{Op}_{\widetilde{h}}(b_{T_0})\psi(\widetilde{h}D_x)u_0,\psi(\widetilde{h}D_x)u_0\right)\geq \frac{c_1}{2}\|\psi(\widetilde{h}D_x)u_0\|^2-C\widetilde{h}\|\psi(\widetilde{h}D_x)u_0\|^2.
	$$
	To conclude the proof, we just need choose $\widetilde{h}_0<\min\{\frac{c_1}{4C},1\}$.
\end{proof}
As a consequence, we can proof Proposition \ref{reduction2} in the easy regimes:
\begin{corollary}
There exist $h_0>0$ and a integer $N_0$ which depends on $\delta_0$ in Proposition \ref{highfrequency}, such that for all $h<h_0$, $|n|\geq N_0,$ and $ 2^nh\leq\epsilon_0$, the inequality \eqref{singlefrequency} holds true.
\end{corollary}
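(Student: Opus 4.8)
The plan is to prove \eqref{singlefrequency} on a single dyadic band by rescaling \eqref{1Dsemiequation} into one of the model flows of Proposition \ref{highfrequency}, and then to defeat the resulting time dilation by iterating the model estimate. Fix $n$ with $|n|\ge N_0$ and $2^nh\le\epsilon_0$, and set $\widetilde h:=2^nh$, so that $\psi_n(hD_x)=\psi(\widetilde hD_x)$ exactly and $\widetilde h\le\epsilon_0$. Writing the semiclassical frequency as $\xi=2^{-n}\zeta$ with $|\zeta|\sim1$ on the band, the dispersion symbol $\xi^3+\xi^{-1}$ of \eqref{1Dsemiequation} factors as $2^{-3n}(\zeta^3+2^{4n}\zeta^{-1})$ when $n\le -N_0$ (so that $|\xi|\sim 2^{|n|}$ is large and $\xi^3$ dominates) and as $2^{n}(\zeta^{-1}+2^{-4n}\zeta^3)$ when $n\ge N_0$ (so that $|\xi|\sim 2^{-n}$ is small and $\xi^{-1}$ dominates). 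With $\epsilon:=2^{-|n|}$ these are exactly $2^{-3n}p_\epsilon(\zeta)$ and $2^{n}q_\epsilon(\zeta)$ respectively, because $\chi\equiv1$ on the $\zeta$-support of $\psi$. Choosing $N_0>\log_2(1/\delta_0)$ forces $\epsilon<\delta_0$ so that Proposition \ref{highfrequency} applies, and requiring $\epsilon_0\le\widetilde h_0$ forces $\widetilde h\le\widetilde h_0$; this fixes the dependence $N_0=N_0(\delta_0)$.

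Since $p_\epsilon,q_\epsilon$ are $x$-independent and $\chi\equiv1$ on the support of $\psi$, the free propagator of \eqref{1Dsemiequation} and the model propagator coincide as Fourier multipliers on the band, up to a dilation of time. Concretely I will check that, writing $w_0:=\psi(\widetilde hD_x)u_0=\psi_n(hD_x)u_0$,
\begin{equation*}
\psi_n(hD_x)u(t)=\widetilde V(\lambda t)\,w_0,\qquad \lambda=\left(\tfrac{2^{|n|}}{h}\right)^2,
\end{equation*}
where $\widetilde V(s)$ is $U_\epsilon(\mp s)$ or $V_\epsilon(\mp s)$ according to the regime (the sign being irrelevant by time–reversal symmetry of the model estimate, whose proof only uses the group–velocity bound $\left|-\frac{\epsilon^4}{\zeta_0^2}+3\zeta_0^2\right|\ge c_0$, which is even in $t$). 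The decisive feature is that $\lambda$ is \emph{large}, $\lambda\ge h_0^{-2}$, so a fixed observation time for \eqref{1Dsemiequation} corresponds to the long model time $\lambda T_0$. Because $\widetilde V$ is a unitary group commuting with $\psi(\widetilde hD_x)$, a single application of \eqref{veryhigh}/\eqref{lesshigh} after the substitution $s=\lambda t$ would only yield the constant $C_0\lambda$, which blows up; this time dilation is the main obstacle.

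The remedy is that the model observation integral grows linearly in time, which exactly cancels the dilation. Applying Proposition \ref{highfrequency} to the datum $\widetilde V(jT_0)u_0$ and using $\|\widetilde V(jT_0)w_0\|=\|w_0\|$ gives, for every integer $j\ge0$,
\begin{equation*}
\|w_0\|^2=\|\psi(\widetilde hD_x)\widetilde V(jT_0)u_0\|^2\le C_0\int_{jT_0}^{(j+1)T_0}\|g\,\widetilde V(s)w_0\|^2\,ds .
\end{equation*}
Summing over $0\le j<M$ with $M=\lfloor\lambda\rfloor$ and undoing the substitution $s=\lambda t$ yields
\begin{equation*}
\int_0^{T_0}\|g\,\psi_n(hD_x)u(t)\|^2\,dt=\frac1\lambda\int_0^{\lambda T_0}\|g\,\widetilde V(s)w_0\|^2\,ds\ge\frac{M}{\lambda C_0}\|w_0\|^2\ge\frac1{2C_0}\|w_0\|^2,
\end{equation*}
where the last step uses $M\ge\lambda/2$, valid once $\lambda\ge2$, i.e. once $h_0$ is small. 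Since $w_0=\psi_n(hD_x)u_0$ and $u(0)=u_0$, this is exactly \eqref{singlefrequency} with the uniform constant $C_0'=2C_0$, independent of $n$ and $h$, proving the corollary with $N_0=N_0(\delta_0)$ and $h_0$ chosen as above. The only genuinely non-routine point is the linear-growth iteration in the last paragraph; everything else is the exact rescaling identity and bookkeeping.
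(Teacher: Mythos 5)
Your proof is correct and follows essentially the same route as the paper: rescale each dyadic band to the model flows $U_{\epsilon}$, $V_{\epsilon}$ of Proposition \ref{highfrequency} with $\epsilon=2^{-|n|}<\delta_0$, then iterate the short-time model observability over consecutive time intervals, using unitarity ($L^2$-conservation) to absorb the time dilation. If anything, your bookkeeping is more careful than the paper's: the paper rescales time by $2^{\pm 2n}$ (omitting a factor of $h^2$), whereas your dilation factor $\lambda=(2^{|n|}/h)^2$ is the correct one, and the iteration is otherwise identical.
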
	
\begin{proof}
Take $N_0\in\mathbb{N}$ such that $2^{-N_0}<2^{-10}\delta_0$. Fix $h_0<\min\{2^{-N_0}\epsilon_0,\widetilde{h_0}\}>0$. We first consider the case $n\leq -N_0$.
 Define a new semi-classical parameter $\widetilde{h}_n=2^nh\ll 1$ and rescale the time variable by setting $w_n(t,x):=\psi(\widetilde{h}_nD_x)u(2^{2n}t,x)$.  $w_n$ satisfies the following equation:
	$$ \widetilde{h_n}\partial_tw_n+(\widetilde{h_n}\partial_x)^3w_n+2^{4n}(\widetilde{h_n}\partial_x)^{-1}w_n=0.
	$$
	Applying  \eqref{veryhigh} to $w_n$ with $\epsilon=2^n\leq \delta_0$ and $\widetilde{h}=\widetilde{h}_n$ we obtain that
	$$ \|w_n(0)\|^2\leq C\int_0^{T_0}\|gw_n(t)\|^2dt.
	$$
	From conservation of $L^2$ norm along the flow, we apply the inequality above $2^{-2n}-1$ times to obtain
	\begin{equation*}
	\begin{split}
	\frac{1}{2^{2n}}\|u_n(0)\|^2\leq  \frac{C}{2^{2n}}\sum_{M=0}^{2^{-2n}-1}\int_{M2^{2n}T_0}^{(M+1)2^{2n}T_0}\|gu_k(t)\|^2dt
	= \frac{C}{2^{2n}}\int_0^{T_0}\|gu_n(t)\|^2dt,
	\end{split}
	\end{equation*}
	and this is exactly
	$$ \|\psi_n(hD_x)u(0)\|^2\leq C\int_0^{T_0}\|g\psi_n(hD_x)u(t)\|^2dt.
	$$
	
We next consider the case $n\geq N_0$ and $2^nh\leq\epsilon_0$. Define the new small semi-classical parameter $\widetilde{h}_n=2^{n}h$, thanks to the restriction that $2^{n}h\leq \epsilon_0\ll 1$.
	Denote by $u_n=\psi(\widetilde{h}_nD_x)u$ and define $v_n(t,x)=u_n(2^{-2k}t,x)$. Thus $v_n$ solves the equation
	$$ \widetilde{h}_n\partial_tv_n+2^{-4n}(\widetilde{h}_n\partial_x)^3v_n+(\widetilde{h}_n\partial_x)^{-1}v_n=0.
	$$
	Applying \eqref{lesshigh} with $\widetilde{h}=\widetilde{h}_n,\epsilon=2^{-n}$, we obtain that
	$$ \|v_n(0)\|^2\leq C\int_0^{T_0}\|gv_n(t)\|^2dt.
	$$
	
	Again from conservation of $L^2$ norm as in the previous argument, we finally have
	$$ \|u_n(0)\|^2\leq C\int_0^{T_0}\|gu_n(t)\|^2dt.
	$$
	
\end{proof}

%%%%%%%%%%%%%%%%%%%%%%%%%%%%%%%%%%%%%%%%%%%%%%%
\subsection{Near the critical points}
Now we prove inequality \eqref{singlefrequency} for $|n|\leq N_0$. Since $N_0$ only depends on $\mu,\nu>0$ which is chosen in a priori, it would be suffices to prove the inequality for $n=0$ only. Rewriting \eqref{1Dsemiequation} as
$$ h^3D_tu-\phi(hD_x)u=0, u=\psi(hD_x)u,
$$
with Fourier multiplier $\phi(\xi)=\xi^3+\frac{1}{\xi}$. There are only two zeros of $\phi'(\xi)=3\xi^2-\frac{1}{\xi^2}$, say $\xi_0=\pm\frac{1}{\sqrt[4]{3}}$. Splitting $\psi(\xi)=\psi_+(\xi)+\psi_-(\xi)$ with
$\psi_+=\psi\mathbf{1}_{\xi>0},$ and $ \psi_-=\psi\mathbf{1}_{\xi<0}$, it would be sufficient to prove \eqref{singlefrequency} for $u=\psi_+(hD_x)u$. 
For $\delta>0$, we take another cut-off $\chi_{\delta}\in C_c^{\infty}(\mathbb{R})$ such that
$$ \chi_{\delta}(\xi)|_{|\xi-\xi_0|\leq \delta }\equiv 1, \quad  \chi_{\delta}(\xi)|_{|\xi-\xi_0|>2\delta}\equiv 0.
$$
Taking $\delta>0$ smaller, we may assume that $\chi_{\delta}(\xi)\psi(\xi)=\chi_{\delta}(\xi).$ On the support of $(1-\chi_{\delta})\psi_+$, we have $|\psi'(\xi)|\geq c_{\delta}>0$, thus the same propagation argument as in the previous subsection yields
\begin{equation}\label{outsidecritical}
\|(1-\chi_{\delta}(hD_x))\psi_+(hD_x)u(0)\|^2\leq C_{\delta}\int_0^{T_{\delta}}\|g(1-\chi_{\delta}(hD_x))\psi_+(hD_x)u(t)\|^2dt
\end{equation}
for some $C_{\delta},T_{\delta}$ depending on $\delta>0.$ To complete the proof, it remains to prove the similar inequality for $\chi_{\delta}(hD_x)u$. Indeed, the sum of the two frequency pieces on the right hand side can be bounded by $\|g\psi_+(hD_x)u\|_{T_{\delta}}^2+C_{\delta}Th\|u(0)\|^2$ in which the error term comes from the commutator $[g,\chi_{\delta}(hD_x)]$.

Before treating the term $\chi_{\delta}(hD_x)u$, we make a further simplification. Denote by $v=\chi_{\delta}(hD_x)u$, $v=e^{i\lfloor\frac{\xi_0}{h}\rfloor x}w$, and then 
$ \displaystyle{\widehat{w}(k)=\widehat{v}\left(k+\left\lfloor\frac{\xi_0}{h}\right\rfloor\right)}.$
We see that
\begin{equation}\label{toymodel}
h^3D_tw-\Phi(hD_x)w=0, w(0)=\theta_{\delta}(hD_x)w_0,
\end{equation} 
with 
$$ \Phi(\xi)=\phi\left(\xi+h\left\lfloor\frac{\xi_0}{h}\right\rfloor\right),\theta_{\delta}(\xi)=\chi\left(\xi+h\left\lfloor\frac{\xi_0}{h}\right\rfloor\right).
$$
Note that the support of $\theta_{\delta}$ is now near the origin and 
$$
\phi'(\sigma_h)=0,\Phi''(\sigma_h)=\frac{12}{\sqrt[4]{3}}>0,\sigma_h=h\left(
\frac{\xi_0}{h}-\left\lfloor\frac{\xi_0}{h}\right\rfloor\right).$$
We are now ready to close the demonstration of Proposition \ref{reduction2} by proving the following, which is the main ingredient of this note:
\begin{proposition}\label{final}
	There exist constants $\delta>0,h_0>0$ small and $C_T>0$ such that for all $0<h<h_0$ and  $w=\theta_{\delta}(hD_x)w$, $h$ dependent solution to \eqref{toymodel}, we have
	$$ \|w(0)\|^2\leq C_T\int_0^T\int_{\mathbb{T}}\|g(x)w(t)\|^2dt.
	$$ 
\end{proposition}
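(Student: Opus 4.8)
The plan is to diagonalize the evolution in the Fourier variable and reduce the observability to the coercivity of an explicit Gram matrix, isolating the genuinely degenerate piece near the critical frequency. Writing $w(t,x)=\sum_{k}\widehat{w_0}(k)\,e^{it\lambda_k}e^{ikx}$ with $\lambda_k=\Phi(hk)/h^3$, I would first Taylor expand $\Phi$ around its critical point. Since $\Phi'(\sigma_h)=0$ and $\Phi''(\sigma_h)=12/\sqrt[4]{3}>0$, on the support of $\theta_\delta$ (where $|hk-\sigma_h|\le 2\delta$) one has, modulo an irrelevant global phase $\Phi(\sigma_h)/h^3$,
\[
\lambda_k=\frac{\Phi''(\sigma_h)}{2h}\Big(k-\tfrac{\sigma_h}{h}\Big)^2+\frac{\Phi'''(\sigma_h)}{6}\Big(k-\tfrac{\sigma_h}{h}\Big)^3+O\!\Big(h\big(k-\tfrac{\sigma_h}{h}\big)^4\Big),
\]
where the cubic and higher terms are smaller than the quadratic one by a factor $O(\delta)$. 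Thus, after the shift, the dynamics is governed by a \emph{quadratic (Schr\"odinger-type) dispersion relation whose group velocity vanishes} at $k\approx\sigma_h/h$; this vanishing is precisely the KP-I phenomenon absent for KP-II, and $\delta$ is chosen small so that the quadratic term dominates uniformly.

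Next I would expand the observation term directly. Setting $a=(\widehat{w_0}(k))_k$, a computation gives $\int_0^T\|g w(t)\|^2\,dt=\langle M a,a\rangle$ with $M_{k,k'}=2\pi\,\widehat{g^2}(k'-k)\int_0^T e^{i(\lambda_k-\lambda_{k'})t}\,dt$, a Hermitian matrix whose diagonal equals $T\int_{\mathbb{T}}g^2>0$. The goal becomes $M\succeq c_T I$ on the range of $\theta_\delta(hD_x)$. I would prove this by strict diagonal dominance after carving out the finitely many resonant modes. For $|k-\sigma_h/h|\ge K_0$ with $K_0$ a fixed ($h$-independent) constant, the consecutive gaps $\lambda_{k+1}-\lambda_k\approx \frac{\Phi''(\sigma_h)}{h}(k-\sigma_h/h)$ tend to $+\infty$ as $h\to0$; combined with the decay of $\widehat{g^2}$ (guaranteed by $g\in C_c^2$) a Schur test shows that these rows contribute an off-diagonal part of operator norm $o(1)$. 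This is the easy, Ingham-type regime, entirely analogous to the non-critical estimates of the previous subsection.

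The hard part is the finite cluster $\{|k-\sigma_h/h|<K_0\}$, where the vanishing group velocity makes some gaps $\lambda_k-\lambda_{k'}$ stay bounded or tend to $0$, so that time integration alone cannot separate the corresponding modes. Passing to the limit $h\to0$ along a subsequence with $\sigma_h/h\to\beta_*$, the quadratic term forces resonances exactly between reflected pairs $k'=2\beta_*-k$ (which occur only when $2\beta_*\in\mathbb{Z}$, giving blocks of size at most two). I would resolve these \emph{spatially} rather than temporally: the associated $2\times2$ block is strictly diagonally dominant because $2\pi|\widehat{g^2}(m)|<\int_{\mathbb{T}}g^2=2\pi\widehat{g^2}(0)$ for every $m\ne0$, the strict inequality holding since $g\ge0$ is not a.e. a pure exponential on the open set $\{g>0\}$ (equivalently, the exponentials $e^{ikx}$ stay linearly independent when restricted to $\{g>0\}$). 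A compactness argument over $\beta_*\in[0,1]$ and the finitely many labels upgrades this to a uniform lower bound for all small $h$. Writing $M=D+E$ with $D$ the (uniformly positive) block-diagonal part and $\|E\|=o(1)$ then yields $M\succeq c_T I$, and hence the claimed observability for every $T>0$.

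I expect the cluster analysis to be the main obstacle: one cannot simply reduce to the genuine Schr\"odinger propagator, because over the fixed window $[0,T]$ the cubic correction accumulates a phase of size $O(\delta^3/h^3)$ and is not a small perturbation of the flow, only of the symbol. The argument therefore has to extract observability from the intrinsic frequencies $\lambda_k$ themselves, using the dispersive (growing) gaps away from the critical point and the \emph{spatial} separation provided by the positivity and support of $g$ at the critical point. A conceptually cleaner but more technical alternative would replace the cluster analysis by a two-microlocal semiclassical defect measure localized at $\xi_0$, whose transverse evolution is a Schr\"odinger flow and whose invariance together with the observation hypothesis forces it to vanish.
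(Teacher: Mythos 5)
Your Gram-matrix argument is a genuinely different route from the paper's. The paper splits $w=w_L+w_H$ according to the size of $|\Phi(hk)|$, controls the low block by the spectral inequality \eqref{spectral}, controls the high block by dyadic rescaling plus semiclassical propagation (Lemma \ref{highfrequencies}), and then must kill the cross term $\int_0^T\int g w_H\,\ov{g w_L}$; the crux there is the elementary lemma producing $\mu_1,\mu_2\in[1/8,7/8]$ with $\mu_1+\mu_2=2r_h$ mod $\mathbb{Z}$, which lets one place the frequency cut so that no near-resonant pair straddles it, giving $|\Phi(hk_2)-\Phi(hk_1)|\gtrsim h^2|k_1-k_2|$ and hence a cross term $O(h)\|w_0\|^2$. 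Your scheme replaces the spectral inequality by the strict bound $|\widehat{g^2}(m)|<\widehat{g^2}(0)$ for $m\neq 0$ on $2\times 2$ resonant blocks, and replaces semiclassical propagation by explicit time integration; if it closes, it is more elementary (no Carleman-based spectral inequality, no Egorov/G\aa rding). Both proofs ultimately fight the same enemy: the reflection near-resonances $k+k'\approx 2r_h$ of the near-parabolic phase.

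But there is a genuine gap, and it sits exactly at that enemy. Your outer-regime claim --- consecutive gaps $\lambda_{k+1}-\lambda_k\to\infty$, hence off-diagonal operator norm $o(1)$ --- is false. The map $k\mapsto\lambda_k$ is essentially a parabola in $k-r_h$, hence non-monotone: large gaps along each branch say nothing about differences \emph{between} the two branches. Whenever $2r_h$ lies in a suitable window around an integer $j$ (such $h$ occur arbitrarily close to $0$ and cannot be excluded, since $r_h$ equidistributes), there are pairs $k'=j-k$ with both indices in your outer region $|k-r_h|\geq K_0$ whose phase difference stays bounded on $[0,T]$; after the cubic correction, the exactly resonant pairs lie on a shell $|k-r_h|\sim\sqrt{|j-2r_h|/h}$, which can sit anywhere in the outer region. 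For these pairs the time integral is of size $T$, not $o(1)$. What saves you is not the gaps but the fact that such pairs have $|k-k'|\geq 2K_0$, hence coupling $|\widehat{g^2}(k'-k)|=O(K_0^{-2})$: the outer error is $O(T K_0^{-1})+O(h)$, small only after choosing $K_0$ large, and never $o(1)$ at fixed $K_0$. The repair is available --- run the Schur test keeping the reflected pairs explicit, choose $K_0$ large depending only on $g$, and note that your cluster lower bound $2\pi T\bigl(\widehat{g^2}(0)-\sup_{m\neq 0}|\widehat{g^2}(m)|\bigr)$ is uniform in $K_0$, so the absorption closes --- but as written the quantitative claim driving the final step ($M=D+E$ with $\|E\|=o(1)$) is wrong, and the stated mechanism misses precisely the resonance that the paper's $\mu_1,\mu_2$ lemma is engineered to dodge. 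Relatedly, ``resonances occur only when $2\beta_*\in\mathbb{Z}$'' is inaccurate: resonance is a statement about the rate at which $2r_h$ approaches $\mathbb{Z}$ relative to $h$ (and the locus is shifted by the cubic term), so a compactness argument in the limit $\beta_*$ alone cannot detect it; fortunately your $2\times 2$ spatial bound never needs to know whether a given pair is truly resonant, so that step should simply be dropped.
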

The proof is down by splitting the frequency into two parts. One part contains cluster of relatively low frequencies and we control it by spectral inequality. The other part contains relatively high frequencies and will be controlled by propagation estimate after rescaling the time variable. First we notice that the inequality would not change if we replace $w$ by $w\exp\left(\frac{i\Phi(\sigma_h)t}{h^3}\right)$. We may assume that $\Phi(\sigma_h)=0$. Denote by $\displaystyle{r_h=h^{-1}\sigma_h\in [0,1)}$. For any $n_0\in\mathbb{N}$, we define the sharp frequency truncation
$$ \Pi_{\geq n_0}f:=\sum_{|k|\geq n_0}\widehat{f}(k)e^{ikx}.
$$
We divide the proof into several lemmas.
\begin{lemma}\label{highfrequencies}
Given $T>0$, there exist $N_0\in\mathbb{N}$, $h_0>0$, $C_T>0$, such that for any integer $n_0\geq N_0,h<h_0$ and $T>0$,  
\begin{equation}\label{highfrequencycontrol}
\|\Pi_{\geq n_0}w(0)\|^2\leq C_T\int_{0}^{T}\|g(x)w(t)\|^2dt
\end{equation}
holds true for all solutions of \eqref{toymodel}.
\end{lemma}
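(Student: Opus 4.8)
The plan is to establish \eqref{highfrequencycontrol} by a dyadic decomposition of the high-frequency region $\{|k|\ge n_0\}$ followed by a rescaled propagation argument on each block, in the spirit of Proposition \ref{highfrequency} and of the Corollary above. Recall that $w=\theta_\delta(hD_x)w$ is localized to $|hk-\sigma_h|\le 2\delta$ and that, after the normalization $\Phi(\sigma_h)=0$, near the critical frequency $\Phi(\xi)=\tfrac12 L(\xi-\sigma_h)^2+O(|\xi-\sigma_h|^3)$ with $L:=\Phi''(\sigma_h)=12/\sqrt[4]{3}>0$. Writing $hk=h(k-r_h)+\sigma_h$, the dispersion relation $\omega(k)=\Phi(hk)/h^3$ of \eqref{toymodel} becomes $\omega(k)=\tfrac{L}{2h}(k-r_h)^2+O\!\big(h(k-r_h)^3\big)$, so on $\operatorname{supp}\theta_\delta$ (where $|k-r_h|\le 2\delta/h$) the cubic correction is a factor $\lesssim\delta$ smaller than the quadratic term. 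I fix a smooth dyadic partition $(\beta_m)$ with $\sum_m\beta_m^2\equiv 1$ and $\operatorname{supp}\beta_m\subset\{2^{m-1}\le|\xi|\le 2^{m+1}\}$, set $w_m:=\beta_m(D_x)w$, and note that only the blocks $M:=2^m$ with $n_0/2\lesssim M\lesssim\delta/h$ are nonzero.

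The core step is a propagation estimate for a single block $w_m$ with $M\ge N_0$. On this block the group velocity $\omega'(k)=\Phi'(hk)/h^2\approx L(k-r_h)/h\sim LM/h$ is large but of a single size across the block, which is why I introduce the block-adapted semiclassical parameter $\widetilde h:=1/M$ and rescale time by $s=(M/h)t$. In the variable $\zeta=\widetilde h k\sim 1$ the evolution $e^{it\omega(D_x)}$ becomes a semiclassical flow $\widetilde V_M(s)$ generated by $\mathrm{Op}_{\widetilde h}(\widetilde p_M)$ with $\widetilde p_M(\zeta)=\Phi(hM\zeta)/(h^2M^2)=\tfrac{L}{2}\zeta^2+O(\delta)+O(1/n_0)$; its seminorms are bounded uniformly in $M$ and $h$, and $\widetilde p_M'(\zeta)=\Phi'(hM\zeta)/(hM)=L\zeta+O(\delta)$ is bounded below by a fixed $c_0>0$ on $\operatorname{supp}\beta_0$ once $\delta$ is small. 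Exactly as in Proposition \ref{highfrequency}, Egorov's theorem applied to $a=g^2\widetilde\beta$ (with $\widetilde\beta$ a slight enlargement of $\beta_0$ and $g$ first smoothed), combined with the lower bound $\int_0^{S_0}a\circ\phi_s\,ds\ge c_1>0$ — valid for a fixed $S_0$ because the nondegenerate velocity $\widetilde p_M'$ forces the trajectory $x_0+\widetilde p_M'(\zeta_0)s$ to sweep out all of $\mathbb{T}$ and $\int_{\mathbb{T}}g^2>0$ — and the sharp G\aa rding inequality yield $\|w_m(0)\|^2\le C\int_0^{S_0}\|g\widetilde V_M(s)w_m(0)\|^2\,ds$ with $C,S_0,c_0,c_1$ all independent of $M$ and $h$.

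Undoing the time rescaling, $s\in[0,S_0]$ corresponds to $t\in[0,hS_0/M]\subset[0,T]$, so the block estimate reads $\|w_m(0)\|^2\le C\tfrac{M}{h}\int_0^{hS_0/M}\|gw_m(t)\|^2\,dt$. The large prefactor $M/h$ is removed exactly as in the Corollary: since $\|w_m(t)\|$ is conserved and the flow is time-translation invariant, the same inequality holds on each of the $N\sim TM/(hS_0)$ consecutive subintervals of length $hS_0/M$ covering $[0,T]$, and summing the $N$ copies and dividing by $N$ gives the clean block observability $\|w_m(0)\|^2\le C_T\int_0^T\|gw_m(t)\|^2\,dt$ with $C_T=O(S_0/T)$ independent of $m$ and $h$. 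This is where the rescaling pays off: the degeneracy of $\omega'$ at the critical point, fatal for the naive propagation argument, is harmless because we only use frequencies $|k|\ge n_0$, for which $\widetilde h=1/M\le 1/n_0$ is genuinely small.

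It remains to sum over $m$. Since $\sum_m\beta_m^2\equiv 1$, the left-hand sides add up to $\|\Pi_{\ge n_0}w(0)\|^2$ (up to the single boundary octave near $|k|=n_0$, which is absorbed). For the right-hand side I write $gw_m=\beta_m(D_x)(gw)+[g,\beta_m(D_x)]w$: Plancherel gives $\sum_m\|\beta_m(D_x)(gw)\|^2\le\|gw\|^2$, while the commutator estimate $\|[g,\beta_m(D_x)]w\|\lesssim 2^{-m}\|\widetilde\beta_m(D_x)w\|$ makes the commutator contributions sum to $\lesssim n_0^{-2}\|\Pi_{\gtrsim n_0}w\|^2$; integrating in time and using $\int_0^T\|\Pi_{\ge n_0}w(t)\|^2\,dt=T\|\Pi_{\ge n_0}w(0)\|^2$, this term is $\lesssim n_0^{-2}T\|\Pi_{\ge n_0}w(0)\|^2$ and is absorbed into the left-hand side for $n_0$ large, giving \eqref{highfrequencycontrol}. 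I expect the main obstacle to be exactly this final bookkeeping: the whole argument must run with a constant that is \emph{uniform} in $m$, since otherwise summing the $\sim\log(1/h)$ dyadic blocks would cost a factor $\log(1/h)$; and the commutator errors generated by the non-smooth, frequency-spread multiplier $g$ must be shown to live at frequencies $\gtrsim n_0$ so that they are reabsorbed, rather than polluting the estimate with the low-frequency norm of $w$.
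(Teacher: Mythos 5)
Your strategy coincides, almost line by line, with the paper's own proof of Lemma \ref{highfrequencies}: the paper decomposes $\Pi_{\geq n_0}w(0)=\sum_{l_0\leq l\leq L_0}\psi(2^lhD_x)\Pi_{\geq n_0}w(0)$ with $2^{-l_0}=4\delta$, $2^{-L_0}=n_0h/4$, introduces on each block the semiclassical parameter $h_1=2^lh$ (your $\widetilde h=1/M$, since the block frequencies are $|k|\sim 2^{-l}/h=M$), rescales time by $t\mapsto (M/h)\,t$ exactly as you do, notes that the rescaled symbol $\Phi_l(\xi)=2^{2l}\Phi(2^{-l}\xi)$ satisfies $|\Phi_l'(\xi)|\sim 1$ on $\operatorname{supp}\psi$ (your $\widetilde p_M'(\zeta)=L\zeta+O(\delta)+O(1/n_0)$), runs the Egorov/sharp G\aa rding propagation argument of Proposition \ref{highfrequency} with constants uniform in the block, and removes the large prefactor $M/h$ by $L^2$-conservation and time-translation invariance. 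Up to and including the block estimate $\|\psi(h_1D_x)w(0)\|^2\leq C_T\int_0^T\|g\psi(h_1D_x)w(t)\|^2dt$, your proposal is correct and is the paper's argument.

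The genuine gap is the refined commutator estimate $\|[g,\beta_m(D_x)]w\|\lesssim 2^{-m}\|\widetilde\beta_m(D_x)w\|$ on which your final absorption rests. Since $g$ is not band-limited (it is only $C_c^2$, so $|\widehat{g}(j)|\sim\langle j\rangle^{-2}$ and its Fourier tails do not vanish), the commutator does not act block-locally on $w$: for a single low mode $w=e^{ik_0x}$ with $|k_0|\ll n_0$ one has $\widetilde\beta_m(D_x)w=0$, yet $[g,\beta_m(D_x)]w=-\beta_m(D_x)\bigl(g\,e^{ik_0x}\bigr)$ has squared norm $\sum_{k}|\beta_m(k)|^2|\widehat{g}(k-k_0)|^2>0$ in general. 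The correct bound keeps the global norm, $\|[g,\beta_m(D_x)]w\|\lesssim 2^{-m}\|w\|$, and after summing the blocks and integrating in time the commutator error is $C_Tn_0^{-2}\|w(0)\|^2$ with the \emph{full} norm of $w(0)$ --- precisely what the paper obtains from its cruder bound $\|[g,\psi(2^lhD_x)]\|_{L^2\to L^2}\leq C2^lh$. Such an error cannot be absorbed into $\|\Pi_{\geq n_0}w(0)\|^2$ for a general solution (take $w(0)$ essentially supported at low frequencies), so your last step, as written, collapses. To be fair, the paper's own proof ends at the same place and absorbs the term ``by choosing $n_0$ sufficiently large''; that absorption is legitimate exactly in the situation where the lemma is invoked in the proof of Proposition \ref{final}, namely for solutions with $\Pi_{\geq n_0/2}w=w$ (there $\|w(0)\|=\|\Pi_{\geq n_0/2}w(0)\|$, so the error does absorb into the left-hand side). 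The honest conclusion of your argument --- and of the paper's --- is \eqref{highfrequencycontrol} with an additional $C_Tn_0^{-2}\|w(0)\|^2$ on the right, which suffices for the application; the attempt to delete that term by frequency-localizing the commutator is the one step that is genuinely wrong.
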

\begin{proof}
We rewrite 
$$ \Pi_{\geq n_0}w(0)=\sum_{l_0\leq l\leq L_0}\psi(2^lhD_x)\Pi_{\geq n_0}w(0),
$$
with $\displaystyle{2^{-l_0}=4\delta, 2^{-L_0}=\frac{n_0h}{4}}$. From almost orthogonal inequality
$$ \|\Pi_{\geq n_0}w(0)\|^2\leq 4\sum_{l\leq l\leq L_0}\|\psi(2^lhD_x)w(0)\|^2,
$$
we need estimate each term in the summation. By choosing $n_0\geq N_0$ large, we denote by $h_1=2^lh\leq \frac{4}{n_0}$ a new semi-classical parameter. We put $\omega=\psi(h_1D_x)w$, and then $\omega$ solves
$$ 2^{-l}h_1^3D_t\omega-\Phi_l(h_1D_x)\omega=0,
$$
with $\Phi_l(\xi)=2^{2l}\Phi(2^{-l}\xi)$. Note that $\Phi_l$ is a symbol with uniform bound in $l$ for $|\xi|\leq 2$ as well as all of its derivatives. We rescale the time by setting $v(t,x):=\omega(2^{-l}h_1^2t,x)$, hence
$$ h_1D_tv-\Phi_l(h_1D_x)v=0.
$$
Notice that $|\partial_{\xi}\Phi_l(\xi)|=|2^l\Phi'(2^{-l}\xi)|\sim 1$ for $\xi\in\mathrm{ supp }\psi$.
From the same argument as in the proof of Proposition \ref{highfrequency}, there exist $T_0>0$ and $C_{T_0}>0$ such that $$ \|\psi(h_1D_x)v(0)\|_{L^2}^2\leq C_{T_0}\int_0^{T_0}\int_{\mathbb{T}}|g(x)\psi(h_1D_x)v(t,x)|^2dxdt
	$$
	holds true for all $h_1=2^{l}h$, provided that $h<h_0$ is small enough and $n_0\geq N_0$ is large(while keeping the relation $hn_0\ll 1$).
Back to the function $w$, we have
\begin{equation*}
\begin{split}
\|\psi(h_1D_x)w(0)\|_{L^2}^2=\|\psi(h_1D_x)v(0)\|_{L^2}^2\leq &C_{T_0}\int_0^{T_0}\int_{\mathbb{T}}|g(x)\psi(h_1D_x)v(t,x)|^2dxdt\\
=&\frac{C_{T_0}}{h_1h}\int_0^{h_1hT_0}\int_{\mathbb{T}}|g(x)\psi(h_1D_x)w(t,x)|^2dxdt.
\end{split}
\end{equation*}
Thanks to the conservation of $L^2$ norm, we have for all $q\in\mathbb{N}$,
$$\|\psi(h_1D_x)w(0)\|_{L^2}^2=\|\psi(h_1D_x)w(qh_1hT_0)\|_{L^2}^2\leq \frac{C_{T_0}}{h_1h}\int_{qh_1hT_0}^{(q+1)h_1hT_0}\int_{\mathbb{T}}|g(x)\psi(h_1D_x)w(t,x)|^2dxdt.
$$
Summing $q$ from $0$ to $\lfloor \frac{T}{h_1hT_0}\rfloor$, we have
\begin{equation}\label{h_1semi}
\|\psi(h_1D_x)w(0)\|_{L^2}^2\leq C_T\int_0^T\int_{\mathbb{T}}|g(x)\psi(h_1D_x)w(t,x)|^2dxdt.
\end{equation}
Thus
\begin{equation*}
\begin{split}
\sum_{l_0\leq l\leq L_0}\|\psi(2^lhD_x)w(0)\|_{L^2}^2\leq &C_T\sum_{l_0\leq l\leq L_0}\int_0^T\int_{\mathbb{T}}|g(x)\psi(2^lhD_x)w(t,x)|^2dxdt\\
\leq &\sum_{l_0\leq l\leq L_0}C_T\int_0^T\int_{\mathbb{T}}\left(|\psi(2^lhD_x)(gw)(t,x)|^2+2^{2l}h^2|w(t,x)|^2\right)dxdt\\
\leq &C_T\int_0^T\int_{\mathbb{T}}|gw(t,x)|^2dxdt+\frac{C_T}{n_0^2}\|w(0)\|_{L^2}^2,
\end{split}
\end{equation*}
where we have used the simple commutator estimate $\|[g,\psi(2^lhD_x)]\|_{L^2\rightarrow L^2}\leq C2^lh$ in the second inequality. This completes the proof by choosing $n_0$ sufficiently large.
\end{proof}

We need the following spectral inequality, and the proof is classical and can be found in \cite{Lebeaucour}.
\begin{lemma}
	There exists an positive increasing function $\kappa:\mathbb{R}_+\rightarrow\mathbb{R}_+$, such that
	\begin{equation}\label{spectral}
	\sum_{|k|\leq m_0}|c_k|^2\leq \kappa(m_0)\int_{\mathbb{T}}\left|g(x)\sum_{|k|\leq m_0}c_ke^{ikx}\right|^2dx.
	\end{equation}
\end{lemma}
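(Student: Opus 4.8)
The plan is to reduce the weighted lower bound to a localized $L^2$ estimate on a single interval, and then to invoke quantitative unique continuation for trigonometric polynomials.

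First I would exploit the structure of $g$. Since $g\in C_c^2(\mathbb{T})$ is non-negative with $\int_{\mathbb{T}}g=1$, it is not identically zero, so by continuity there are an open interval $I\subset\mathbb{T}$ and a constant $c_0>0$ with $g\geq c_0$ on $I$. Writing $P(x)=\sum_{|k|\leq m_0}c_ke^{ikx}$, Parseval identifies the left-hand side of \eqref{spectral} with a constant multiple of $\|P\|_{L^2(\mathbb{T})}^2$, while the right-hand integrand dominates $c_0^2|P|^2$ on $I$. Thus it suffices to find an increasing $\kappa$ with
\[
\|P\|_{L^2(\mathbb{T})}^2\leq \kappa(m_0)\int_I|P(x)|^2\,dx
\]
for all trigonometric polynomials $P$ of degree at most $m_0$; this is the genuine content of the lemma, and the weight $g$ plays no further role.

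For each fixed $m_0$ I would obtain the constant by a soft compactness argument, which is all that our application requires (there $m_0=n_0$ is a fixed integer, so no growth rate is needed). On the finite-dimensional space $V_{m_0}=\operatorname{span}\{e^{ikx}:|k|\leq m_0\}$ both $P\mapsto\|P\|_{L^2(\mathbb{T})}$ and $P\mapsto(\int_I|P|^2)^{1/2}$ are norms, so equivalence of norms produces a constant $\kappa_0(m_0)$; the increasing majorant $\kappa(s):=\max_{m\leq s}\kappa_0(m)$ then yields the monotone function claimed. The one point needing justification is that $\int_I|P|^2$ is nondegenerate: if it vanishes, then $P$ vanishes on $I$, but $P$ is the restriction to the unit circle of the Laurent polynomial $z\mapsto\sum_{|k|\leq m_0}c_kz^k$, holomorphic on $\mathbb{C}\setminus\{0\}$; vanishing on an arc forces it to vanish identically, hence all $c_k=0$.

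If an explicit rate were wanted, I would instead run the classical quantitative route borrowed from \cite{Lebeaucour}: the substitution $z=e^{ix}$ turns $P$ into $z^{-m_0}Q(z)$ with $Q$ algebraic of degree $2m_0$ and $|P|=|Q|$ on the circle, after which the Tur\'an--Nazarov (equivalently Remez-type) inequality gives $\kappa(m_0)\sim c_0^{-2}(C/|I|)^{Cm_0}$, already increasing in $m_0$. The only substantive obstacle in either route is exactly the passage from ``small on $I$'' to ``small on $\mathbb{T}$''—the quantitative unique continuation; in the soft route this is the analyticity argument above and is immediate, while in the quantitative route it is precisely the content of Tur\'an--Nazarov, whose standard proof via the Remez inequality for algebraic polynomials is the classical ingredient we cite.
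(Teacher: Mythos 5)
Your proof is correct, but it is worth noting that the paper does not actually prove this lemma at all: it simply declares the result classical and cites Lebeau's Carleman-inequality lecture notes \cite{Lebeaucour}, where the spectral inequality is obtained with an explicit constant $\kappa(m_0)\sim Ce^{Cm_0}$ (the route closest to your second, quantitative variant via Tur\'an--Nazarov/Remez). Your soft argument is a genuine alternative: the reduction to an interval $I$ where $g\geq c_0$ is the same in spirit, but you then replace the quantitative unique-continuation machinery by finite-dimensional norm equivalence on $V_{m_0}=\operatorname{span}\{e^{ikx}:|k|\leq m_0\}$, with nondegeneracy supplied by the identity theorem (a Laurent polynomial vanishing on an arc vanishes identically), and you recover monotonicity of $\kappa$ by taking the running maximum. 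This is elementary, complete, and --- as you correctly observe --- sufficient for the application in Proposition \ref{final}, since there $n_0$ is fixed \emph{before} $h$ is sent to $0$, so only finiteness of $\kappa(n_0+2)$ matters, not its growth. What the compactness route gives up is any control on $\kappa$: if one ever needed $n_0$ to grow with $h^{-1}$ (e.g.\ to weaken the hypothesis $\kappa(n_0+2)h\ll 1$), only the quantitative $e^{Cm_0}$ bound from the cited classical argument would do. One cosmetic point: your $\kappa(s)=\max_{m\leq s}\kappa_0(m)$ is nondecreasing rather than strictly increasing; replacing it by $\kappa(s)+s$ (or simply reading ``increasing'' weakly, as the paper clearly intends) settles this.
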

The following elementary lemma is needed in the final argument.
\begin{lemma}
For any $r\in[0,1)$, there exist $\mu_1,\mu_2\in\left[\frac{1}{8},\frac{7}{8}\right]$, such that
\begin{equation}\label{modulo}
 \mu_1+\mu_2=2r  \textrm{ mod  }\mathbb{Z}.
\end{equation}
\end{lemma}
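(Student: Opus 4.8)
The plan is to collapse the two-parameter problem into a single congruence by looking for a symmetric solution $\mu_1=\mu_2=\mu$. Under this ansatz the target condition $\mu_1+\mu_2\equiv 2r \pmod{\mathbb{Z}}$ becomes $2\mu\equiv 2r\pmod{\mathbb{Z}}$, i.e.\ $2(\mu-r)\in\mathbb{Z}$, which is the same as saying $\mu$ lies in the coset $r+\frac{1}{2}\mathbb{Z}$. Thus it suffices to produce a single point $\mu\in\left[\frac{1}{8},\frac{7}{8}\right]$ congruent to $r$ modulo $\frac{1}{2}$, and then take $\mu_1=\mu_2=\mu$.

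The key observation is the length count: the interval $\left[\frac{1}{8},\frac{7}{8}\right]$ has length $\frac{3}{4}$, which exceeds the spacing $\frac{1}{2}$ of consecutive elements of the coset $r+\frac{1}{2}\mathbb{Z}$. Hence any interval of this length must contain at least one representative of the class of $r$ modulo $\frac{1}{2}$, so the desired $\mu$ exists. To make this fully explicit I would split according to the position of $r\in[0,1)$: if $r\in\left[\frac{1}{8},\frac{7}{8}\right]$ take $\mu=r$; if $r\in\left[0,\frac{1}{8}\right)$ take $\mu=r+\frac{1}{2}\in\left[\frac{1}{2},\frac{5}{8}\right)$; and if $r\in\left(\frac{7}{8},1\right)$ take $\mu=r-\frac{1}{2}\in\left(\frac{3}{8},\frac{1}{2}\right)$. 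In each case $\mu\in\left[\frac{1}{8},\frac{7}{8}\right]$ and $\mu-r\in\frac{1}{2}\mathbb{Z}$, so $\mu_1+\mu_2=2\mu\equiv 2r\pmod{\mathbb{Z}}$, as required.

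Since every step reduces to an explicit interval check, there is no genuine obstacle here; the only point to verify carefully is that the shifted values $r\pm\frac{1}{2}$ really land inside $\left[\frac{1}{8},\frac{7}{8}\right]$ in the two boundary cases, and this is exactly what the bound $\frac{3}{4}>\frac{1}{2}$ on the length of the interval guarantees.
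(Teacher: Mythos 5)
Your proof is correct and follows essentially the same route as the paper: a case analysis with the symmetric choice $\mu_1=\mu_2=\mu$ and an explicit shift of $\pm\frac{1}{2}$ in the boundary cases (the paper splits on the fractional part $\{2r\}$ rather than on $r$ itself, but this is a cosmetic difference). Your version is, if anything, slightly cleaner, since it handles all three cases with a single uniform recipe $\mu\in\{r,\,r\pm\frac{1}{2}\}$.
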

\begin{proof}
We denote by $\{x\}:=x-\lfloor x\rfloor$, the fractional part of a real number. If $\frac{1}{4}\leq\{2r\}\leq\frac{3}{4},$ then there exist $\mu_1,\mu_2\in\left[\frac{1}{8},\frac{7}{8}\right]$, such that \eqref{modulo} holds true. If $\frac{3}{4}<\{2r\}<1$, we can choose $\mu_1=\mu_2\in\left(\frac{3}{8},\frac{1}{2}\right)$, and then $2\mu=2r$ mod $\mathbb{Z}$. For $0\leq \{2r\}<\frac{1}{4}$, we choose $\mu_1=\mu_2=\mu\in\left[\frac{1}{2},\frac{5}{8}\right)$.	
\end{proof}
Now we finish the proof.
\begin{proof}[Proof of Proposition \ref{final}]
Let $h_0,N_0$ as in Lemma \ref{highfrequencies}. Fix $h>0$ and $n_0>2N_0$ while keeping $hn_0^3\ll 1$ and $\kappa(n_0+2)h\ll 1$. From Lemma \ref{modulo}, for any $n_0\in\mathbb{N}$ satisfies $hn_0^3\ll 1$, there exist $\mu_1,\mu_2\in\left[\frac{1}{8},\frac{7}{8}\right]$ and $m_0\in-\mathbb{N}$, such that
\begin{equation}\label{relation}
(n_0+\mu_2)+(m_0+\mu_1)=2r_h.
\end{equation} 
Put $M_0:=h^{-1}\sqrt{\Phi(h(n_0+\mu_2))}$. Recall that $\Phi$ is strictly increasing for $\xi\in[\sigma_h,\delta)$ and strictly decreasing for $\xi\in (-\delta,\sigma_h]$. Thus there exist $\xi^*<r_h$, such that $\Phi(\xi^*)=h^2M_0^2$. We fix $\delta>0$ small such that $2<\Phi''(\xi)<12$ for all $|\xi|\leq\delta$. Thus $\frac{1}{4}\leq \frac{M_0^2}{(n_0+\mu_2)^2}\leq  9$. We claim that for sufficiently small $h>0$, we have $$\left\lfloor\frac{\xi^*}{h}\right\rfloor=m_0, \quad \left|\left\{\frac{\xi^*}{h}\right\}-\mu_1\right|<\frac{1}{16}. 
$$
Indeed, Taylor expansion gives
\begin{equation*}
\frac{\Phi''(\sigma_h)h^2}{2}\left(\frac{\xi^*}{h}-r_h\right)^2=\frac{\Phi''(\sigma_h)h^2}{2}\left(n_0+\mu_2-r_h\right)^2+O(h^3m_0^3),
\end{equation*}
with implicit constant in big $O$ depending only on $\displaystyle{\sup_{\xi\in(-\delta,\delta)}|\Phi'''(\xi)|}$.  As a consequence, we have
\begin{equation}\label{error}
\left|m_0+\mu_1-\frac{\xi^*}{h}\right|=O(hM_0^2).
\end{equation} 
The claim follows easily by choosing $h$ small enough.

 Define a slightly different frequency truncation
$$ w_{L}(t,x)=\sum_{k:|\Phi(hk)|\leq h^2M_0^2}\widehat{w_0}(k)e^{ikx+\frac{it\Phi(hk)}{h^3}}, \quad w_{H}=w-w_{L}.
$$
From  \eqref{spectral} and the property of $\Phi$, we have
\begin{equation}\label{low}
\|w_L(0)\|^2\leq \kappa(n_0+2)T\int_0^T\int_{\mathbb{T}}|g(x)w_L(t,x)|^2dxdt.
\end{equation}
Note that $\Pi_{\geq \frac{n_0}{2}}w_H=w_H$, then from Lemma \ref{highfrequency} we have
\begin{equation}\label{high}
\|w_H(0)\|^2\leq C_T\int_0^T\|g(x)w(t)\|^2dt.
\end{equation}
We next calculate
\begin{equation*}
\begin{split}
&\left|\int_0^T\int_{\mathbb{T}}g(x)w_H(t,x)\cdot\ov{g(x)w_L(t,x)}dxdt\right|\\
=&\left|\int_0^T\int_{\mathbb{T}}g(x)^2\sum_{|\Phi(hk_1)|\leq h^2M_0^2}\sum_{|\Phi(hk_2)|>h^2M_0^2}\ov{\widehat{w_0}(k_1)}\widehat{w_0}(k_2)e^{-i(k_1-k_2)x}e^{\frac{it}{h^3}(\Phi(hk_2)-\Phi(hk_1))}dxdt\right|\\
=&\left|\sum_{|\Phi(hk_1)|\leq h^2M_0^2}\sum_{|\Phi(hk_2)|>h^2M_0^2}2\pi\widehat{G}(k_1-k_2)\ov{\widehat{w_0}(k_1)}\widehat{w_0}(k_2)h^3\frac{e^{\frac{iT}{h^3}(\Phi(hk_2)-\Phi(hk_1))}-1}{\Phi(hk_2)-\Phi(hk_1)}\right|\\
\leq & Ch^3\sum_{|\Phi(hk_1)|\leq h^2M_0^2}\sum_{|\Phi(hk_2)|>h^2M_0^2}|\widehat{G}(k_1-k_2)\widehat{w_0}(k_1)\widehat{w_0}(k_2)|\frac{1}{\Phi(hk_2)-\Phi(hk_1)}
\end{split}
\end{equation*}
with $G(x)=g(x)^2$. 
  If $(k_1-r_h)(k_2-r_h)\geq 0$,  we have
\begin{equation*}
\begin{split}
|\Phi(hk_2)-\Phi(hk_1)|=\left|\int_{hk_1}^{hk_2}\Phi'(t)dt\right|
\geq h|\Phi'(hk_1)||k_2-k_1|
\geq ch^2|k_2-k_1|,
\end{split}
\end{equation*} 
in the case $|k_1-r_h|\geq 1$.
If otherwise $|k_1-r_h|<1$, we directly estimate 
\begin{equation*}
\begin{split}
|\Phi(hk_2)|-|\Phi(hk_1)|\geq h^2\left(M_0^2-\sup_{\xi\in[-2h,2h]}|\Phi'(\xi)|\right)>\frac{h^2M_0^2}{2}
\end{split}
\end{equation*}
by taking $n_0$ reasonable.  
  There are two possibilities in the case of  $(k_1-r_h)(k_2-r_h)<0$: either $\displaystyle{k_2\geq n_0+1,\frac{\xi^*}{h}\leq k_1<r_h}$, or $\displaystyle{k_2<\left\lfloor\frac{\xi^*}{h}\right\rfloor, r_h<k_1\leq n_0}$. For the first case, we have
\begin{equation*}
\begin{split}
\Phi(hk_2)-\Phi(hk_1)\geq & \Phi(hk_2)-\Phi(\xi^*)\\=&\frac{\Phi''(\sigma_h)h^2}{2}\left(
k_2-\frac{\xi^*}{h}\right)\left(
k_2+\frac{\xi^*}{h}-2r_h\right)+O(h^3M_0^3)\\
\geq & \frac{\Phi''(\sigma_h)h^2}{2}(k_2-k_1)(k_2+m_0+\mu_1-2r_h)+O(h^3M_0^3)\\
\geq & \frac{\Phi''(\sigma_h)h^2}{2}|k_2-k_1|(n_0+1+m_0+\mu_1-2r_h)+O(h^3M_0^2)\\
\geq &\frac{|k_2-k_1|h^2}{16}
\end{split}
\end{equation*}
 by choosing $h$ small enough, thanks to  \eqref{relation}, \eqref{modulo} and \eqref{relation}. In the case that $\displaystyle{k_2<\left\lfloor\frac{\xi^*}{h}\right\rfloor}$, we have
\begin{equation*}
\begin{split}
\Phi(hk_2)-\Phi(hk_1)\geq &\frac{\Phi''(\sigma_h)h^2}{2}(k_1-k_2)(-k_2-k_1+2r_h)+O(h^3M_0^3)\\
\geq &h^2|k_2-k_1|(-m_0-n_0+2r_h)+O(h^3M_0^2)\\
\geq &\frac{|k_1-k_2|h^2}{16}.
\end{split}
\end{equation*}
This implies that
\begin{equation*}
\begin{split}
\left|\int_0^T\int_{\mathbb{T}}g(x)^2w_H(t,x)\cdot\ov{w_L}(t,x)dxdt\right|
\leq &Ch\sum_{k_\neq k_2}|\widehat{G}(k_1-k_2)||\widehat{w_0}(k_1)||\widehat{w_0}(k_2)|\\
\leq &Ch\left(\sum_{k\in\mathbb{Z}}|\widehat{G}(k)|\right)\|w_0\|^2,
\end{split}
\end{equation*}
where we have used Young's convolution inequality. From this, we could improve the estimate of $\|w_L(0)\|^2$ as follows.
\begin{equation*}
\begin{split}
\|w_{L}(0)\|^2\leq &\kappa(n_0+2)T\int_0^T\int_{\mathbb{T}}|gw_{L}(t,x)|^2dxdt\\
=&\kappa(n_0+2)T\int_0^T\int_{\mathbb{T}}|g(x)w(t,x)|^2dxdt-\kappa(n_0+2)T\int_0^T\int_{\mathbb{T}}|g(x)w_{H}(t,x)|^2dxdt\\
-&2\kappa(n_0+2)T\Re\int_0^T\int_{\mathbb{T}}g(x)w_H(t,x)\cdot\ov{g(x)w_{L}(t,x)}dxdt\\
\leq & \kappa(n_0+2)T\int_0^T\|gw(t)\|^2dt+Ch\kappa(n_0+2)T\|w(0)\|^2,
\end{split}
\end{equation*} 
and
\begin{equation*}
\begin{split}
\|w(0)\|^2=&\|w_L(0)\|^2+\|w_H(0)\|^2\\
\leq & \left(C_T+\kappa(n_0+2)T\right)\int_0^T\|gw(t)\|^2dt+Ch\kappa(n_0+2)\|w(0)\|^2.
\end{split}
\end{equation*} 
The last term on the right hand side can be absorbed to the left, and this completes the proof.
\end{proof}

\appendix
\section{On the observability of fractional linear KP I}
In this appendix, we will give a proof of Theorem \ref{dichotomy} for fractional KP I equation 
\begin{equation}\label{fKPI}
\partial_tu-|D_x|^{\alpha}\partial_xu-\partial_x^{-1}\partial_y^2u=0.
\end{equation}

When $\alpha\geq 1$， the proof of observability can be reduced to the 1D uniform observability of 
$$ h^{\alpha+1}D_tv-\Phi_{\alpha}(hD_x)v=0,v=\chi_{\delta}(hD_x)v,
$$
with $\displaystyle{\Phi_{\delta}(\xi)=\phi_{\alpha}\left(\xi+h\left\lfloor\frac{\xi_0}{h}\right\rfloor\right), \phi_{\alpha}(\xi)=|\xi|^{\alpha}\xi+\frac{1}{\xi}}$, after doing the same reduction as in the beginning of section 3.2. Thus it would be sufficient to prove Proposition \ref{final} for solutions of \eqref{fKPI}. Actually, the proof of Proposition \ref{final} works also in the case $\alpha>1$. For $\alpha=1$, we need a little more argument.

Taylor expansion gives
$$ \Phi_1(\xi)=\frac{\Phi_1''(\sigma_h)}{2}(\xi-\sigma_h)^2+\frac{\Phi_1'''(\theta)}{6}(\xi-\sigma_h)^3.
$$
Note that $\Phi_1''(\sigma_h)=\phi_1''(\xi_0)=2A_0$ is independent of $h$, and we have
$$ \Phi_1(hD_x)=h^2A_0(D_x-r_h)^2+O(\|\Phi_1'''\|_{L^{\infty}}))((hD_x-\sigma_h)^3).
$$
with $r_h=\frac{\sigma_h}{h}\in[0,1)$. For $0<\delta\ll 1$ , we decompose
$$ v=v_1+v_2,\quad v_1=\chi_{A\delta}(h^{1/3}D_x)\chi_{\delta}(hD_x)v.
$$
Then $v_1$ solves
$$ D_tv_1-A_0(D_x-\sigma_h)^2v_1=O_{L^2\rightarrow L^2}(A\delta)v_1.
$$
We denote by $S_{\sigma_h}(t)$ the semi-group associated with the evolution Schr\"odinger operator $D_t-A_0(D_x-\sigma_h)^2$. From observability for classical Schr\"odinger equation, we have
$$ \|v_1(0)\|^2\leq C_T\int_0^T\|gS_{\sigma_h}(t)v_1(0)\|^2dt
$$ 
with constant $C_T$ independent of $\sigma_h\in (0,1)$. For this independence assertion, we refer to Lemma 2.4 of \cite{Burq-Zworski}. Therefore, we have from Duhamel formula that 
\begin{equation}
\begin{split}
\|v_1(0)\|^2\leq &C_T\int_0^T\left\| gv_1(t)-g\int_0^tO_{L^2\rightarrow L^2}(\delta)v_1(t')dt'\right\|^2dt \\
\leq &C_T\int_0^T\|gv_1(t)\|^2dt+AC_T\delta\|v_1\|_{T}^2\\
\leq &C_T\int_0^T\|gv_1(t)\|^2dt+AC_T\delta\|v_1(0)\|^2,
\end{split}
\end{equation}
where we have used the conservation of $L^2$ norm in the last step. For given $T>0$, we take $\delta>0$ sufficiently small in a priori, and thus
$$ \|v_1(0)\|^2\leq C_T\int_0^T\|gv_1(t)\|^2dt.
$$
The estimate of $v_2$ follows in the same way as in the proof of Lemma \ref{highfrequencies}. Therefore we have
$$ \|v_2(0)\|^2\leq C_T\int_0^T\|gv_2(t)\|^2dt.
$$ 
Finally, from the commutator estimate $\displaystyle{\left\|\left[\chi_{A\delta}(h^{1/3}D_x)\chi_{\delta}(hD_x),g\right]\right\|_{L^2\rightarrow L^2}\leq Ch^{1/3}},$ the proof is complete.

We now construct the conterexample of observability for the case $\alpha<1$. The construction is in the same spirit as in \cite{Rivas-Sun}.
\begin{proposition}\label{1Dcounterexample}
Suppose $0<\alpha<1$. Then for any $T>0$, there exists a sequence $v_n$, solutions of
$$ h_n^{1+\alpha}D_tv_n+\Phi_1(h_nD_x)v_n=0,
$$
such that
$$ \lim_{n\rightarrow\infty}\frac{\int_0^T\int_{\omega}|v_n(t,x)|^2dxdt}{\int_{\mathbb{T}}|v_n(0,x)|^2dx}=0.
$$
\end{proposition}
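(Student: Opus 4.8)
The plan is to exhibit a concentrating sequence of wave packets that sit at the degenerate critical frequency $\xi_0$ (where $\phi_\alpha'(\xi_0)=0$), travel with vanishing group velocity, and disperse so slowly that they never reach the control interval $\omega=(a,b)$ on the window $[0,T]$. The mechanism is visible from the Taylor expansion of the recentred symbol about its critical point,
\[
\Phi(hk)=h^{2}A_0\,(k-r_h)^{2}+O\big(h^{3}|k-r_h|^{3}\big),\qquad A_0=\tfrac12\,\phi_\alpha''(\xi_0)>0,\ r_h=\sigma_h/h\in[0,1),
\]
so that in the time normalisation of the equation the phase is $t\,\Phi(hk)/h^{1+\alpha}$ and the \emph{effective dispersion coefficient} is $h^{1-\alpha}$. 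This quantity tends to $0$ precisely when $\alpha<1$, and it is exactly this vanishing that makes the higher order corrections negligible while letting a normalised packet keep a spatial width tending to $0$; for $\alpha\ge1$ the coefficient does not decay and observability is recovered, which is why $\alpha=1$ is the threshold.

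Concretely, I would fix $x_0\in\mathbb{T}\setminus\overline{(a,b)}$ and a fixed Schwartz profile $\rho$ (a Gaussian is convenient because it is preserved by the quadratic flow), and set, after periodisation to $\mathbb{T}$,
\[
v_n(0,x)=\ell_n^{-1/2}\,\rho\!\Big(\frac{x-x_0}{\ell_n}\Big)\,e^{\,i\lfloor \xi_0/h_n\rfloor\,x},\qquad \ell_n=(h_n^{\,1-\alpha})^{1/2},
\]
normalised so that $\|v_n(0)\|=1$. After the recentring used in the appendix its frequency content lies in the band $|k|\lesssim \ell_n^{-1}=h_n^{-(1-\alpha)/2}$, which still obeys $h_n|k|\lesssim h_n^{(1+\alpha)/2}\to0$; thus the packet lives in the window where the quadratic approximation of $\Phi$ is valid and where $\theta_\delta(h_nD_x)$ acts as the identity up to negligible error. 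The solution is the explicit Fourier series
\[
v_n(t,x)=\sum_{k}\widehat{v_n(0)}(k)\,e^{\,ikx-\,it\,\Phi(h_nk)/h_n^{1+\alpha}}.
\]

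The first step is to remove the non-quadratic part of the phase: on the above band the cubic and higher terms contribute a total phase of size at most $T\,h_n^{2-\alpha}|k-r_{h_n}|^{3}\lesssim T\,h_n^{2-\alpha}\ell_n^{-3}=T\,h_n^{(1+\alpha)/2}\to0$, uniformly for $t\in[0,T]$; applying $|e^{i\theta}-1|\le|\theta|$ mode by mode and Parseval gives $\sup_{t\in[0,T]}\|v_n(t)-\widetilde v_n(t)\|\to0$, where $\widetilde v_n(t)=e^{-\,it A_0 h_n^{1-\alpha}(D_x-r_{h_n})^{2}}v_n(0)$ is a pure Schr\"odinger (metaplectic) evolution. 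The second step is to read off the localisation of $\widetilde v_n$ from the explicit propagation of the Gaussian coherent state under this quadratic flow: its centre drifts by only $O(h_n^{1-\alpha}T)=o(1)$ (the central frequency sits within $O(1)$ of $r_{h_n}$), while its width grows from $\ell_n$ to $\big(\ell_n^{2}+(h_n^{1-\alpha}T/\ell_n)^{2}\big)^{1/2}=O(\ell_n)=o(1)$ — this is exactly the choice of $\ell_n$ that balances the two contributions. Hence $\widetilde v_n(t)$ stays, up to exponentially small tails, in an $o(1)$-neighbourhood of $x_0$, disjoint from $\overline{(a,b)}$ for $n$ large, so $\int_0^T\int_\omega|\widetilde v_n|^2\,dx\,dt\to0$; combined with the uniform $L^2$-closeness this yields $\int_0^T\int_\omega|v_n|^2\,dx\,dt\to0$ while the denominator is $1$, proving the claim.

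The main obstacle is to make the ``packet stays localised'' step honest on $\mathbb{T}$ rather than on $\mathbb{R}$: one must control the periodisation (the Gaussian tails must not re-enter $\omega$ by wrapping around the torus) and the fact that the frequencies form the lattice $\mathbb{Z}$ rather than a continuum, so that the continuum Schr\"odinger picture faithfully represents the discrete sum. Both are benign precisely because $\alpha<1$ forces $\ell_n\to0$ and $h_n|k|\to0$: the packet remains in a single small chart of $\mathbb{T}$ throughout $[0,T]$, the wrap-around and Poisson-summation corrections are super-polynomially small, and the discrete-versus-continuous dispersion mismatch is absorbed into the same uniform $o(1)$ phase error already estimated above. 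The remaining verifications — the normalisation $\|v_n(0)\|\sim1$, the band localisation after recentring, and the Gaussian tail bound outside the concentration neighbourhood — are routine once the scaling $\ell_n=(h_n^{1-\alpha})^{1/2}$ is fixed.
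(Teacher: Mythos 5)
Your construction is essentially the paper's counterexample --- a periodized Gaussian packet of spatial width $h_n^{(1-\alpha)/2}$, tied to the critical frequency and centred away from the control region --- but your verification that it never registers on $\omega$ runs along a genuinely different route, and it works. The paper keeps the \emph{full} symbol: it writes the solution as an oscillatory sum, periodizes via Poisson summation, and performs two integrations by parts in the frequency variable; the only input is a lower bound on the phase derivative $\varphi'_{t,x}(z)=(x-2\pi m)-h_n^{-\alpha}\Phi_1'(h_nz)t$, which holds on the whole (wide) frequency support $|z|\lesssim B h_n^{-(1-\alpha)}$ because the group-velocity term stays bounded by $\|\Phi_1''\|_{L^\infty}BT$ and $B$ is taken small. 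You instead replace the phase by its quadratic Taylor polynomial, reducing to the exact Schr\"odinger evolution $e^{-itA_0h_n^{1-\alpha}(D_x-r_{h_n})^2}$ of a Gaussian coherent state, whose dispersion coefficient $h_n^{1-\alpha}$ vanishes precisely when $\alpha<1$; the balance $\ell_n^2=h_n^{1-\alpha}$ then keeps the evolved packet of width $O(\ell_n)$. Your route is conceptually cleaner --- it exhibits $\alpha=1$ as the exact threshold and avoids stationary-phase bookkeeping --- but it is more rigid: the cubic phase error $T h_n^{2-\alpha}|k-r_{h_n}|^3$ is small only on bands $|k-r_{h_n}|\lesssim h_n^{-(1-\alpha)/2}$ (on the paper's wider band it is of size $h_n^{2\alpha-1}$, which blows up for $\alpha<1/2$), so your argument is tied to the Gaussian's minimal frequency width, whereas the paper's tolerates any packet with frequency support up to $Bh_n^{-(1-\alpha)}$.

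Three details you should tighten, all routine. First, a Gaussian is not band-limited, so ``mode by mode and Parseval'' needs a splitting: compare phases only on $|k-r_{h_n}|\le \Lambda_n h_n^{-(1-\alpha)/2}$ with $\Lambda_n\to\infty$ slowly (say $\Lambda_n=(\log h_n^{-1})^{1/4}$), where the error is $O\bigl(Th_n^{(1+\alpha)/2}\Lambda_n^3\bigr)\to 0$, and bound the complement by the Gaussian tail mass $O(e^{-c\Lambda_n^2})\to 0$. A frequency cutoff is in fact needed for the solution to be well defined at all: the recentred symbol has a $1/\xi$ singularity at the integer frequency $k=-\lfloor\xi_0/h_n\rfloor$, which a raw Gaussian datum hits (with super-exponentially small, hence negligible, coefficient); this is why the paper's datum carries $\psi(\widetilde h_n k)$ from the start. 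Second, in the recentred equation of the Proposition the packet must sit at frequencies $O(\ell_n^{-1})$ near zero, so the modulation $e^{i\lfloor\xi_0/h_n\rfloor x}$ should \emph{not} appear in $v_n(0)$; it belongs to the original symbol $\phi_\alpha$ and is reinstated only when passing to $w_n$ in the Corollary. Third, your Taylor expansion omits the constant $\Phi_1(\sigma_{h_n})$; as in the paper, normalise $\Phi_1(\sigma_{h_n})=0$ (it is a global phase, so harmless, but as written the $L^2$ comparison with $\widetilde v_n$ would fail without it). None of these affects the validity of your approach.
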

\begin{proof}
We may assume that $\omega=(-\pi,-\beta)\cup (\beta,\pi]$. Take $G(x)=e^{-\frac{x^2}{2}}$ and define $G^{\epsilon_n}(x)=\frac{1}{\sqrt{\epsilon_n}}G\left(\frac{x}{\epsilon_n}\right)$. Denote the Fourier coefficient of $G^{\epsilon_n}$ by $$g^{\epsilon_n}(k)=\frac{1}{2\pi}\int_{-\pi}^{\pi}G^{\epsilon_n}(x)e^{-ikx}dx=\frac{\sqrt{\epsilon_n}}{2\pi}\int_{-\frac{\pi}{\epsilon_n}}^{\frac{\pi}{\epsilon_n}}G(z)e^{-i\epsilon_nkz}dz.$$
The coefficient function $g^{\epsilon_n}(z)$ satisfies the following estimates:
\begin{equation}\label{coefficient}
\|g^{\epsilon_n}\|_{L^{\infty}(\mathbb{R})}=O(\epsilon_n^{1/2}), \|(g^{\epsilon_n})'\|_{L^{\infty}(\mathbb{R})}=O(\epsilon_n^{3/2}), \|(g^{\epsilon_n})''\|_{L^{\infty}(\mathbb{R})}=O(\epsilon_n^{5/2}).
\end{equation}

Take an even cut-off function $\psi\in C_c^{\infty}(\mathbb{R})$ with supp $\psi\subset [-B,B]$ and $0\leq \psi\leq 1$, $\psi|_{[-b,b]}\equiv 1$. We define
$$ v_{n,0}=\sum_{k\in\mathbb{Z}}g^{\epsilon_n}(k)\psi(\widetilde{h}_nk)e^{ikx},
$$
with $\widetilde{h_n}=h_n^{1-\alpha}$. The corresponding solution  is given explicitly by
$$ v_n(t,x)=\sum_{k\in\mathbb{Z}}g^{\epsilon_n}(k)\psi(\widetilde{h}_nk)\exp\left(ikx-\frac{it\Phi_1(\widetilde{h}_n^{\frac{1}{1-\alpha}}k)}{\widetilde{h}_n^{\frac{1+\alpha}{1-\alpha}}}\right).
$$
We first estimate the lower bound of the mass of initial data. 
$$ \|G^{\epsilon_n}\|_{L^2(\mathbb{T})}^2=\sum_{k\in\mathbb{Z}}|g^{\epsilon_n}(k)|^2\sim 1
$$
holds from Plancherel theorem and the definition of $g^{\epsilon_n}(k)$. We next estimate the mass away from the frequency scale $h_n^{-1}$, that is 
\begin{equation*}
\begin{split}
\sum_{k\in\mathbb{Z}}\left|(1-\psi(\widetilde{h}_nk))g^{\epsilon_n}(k)\right|^2\leq &\sum_{|\widetilde{h_n}k|>b}|g^{\epsilon_n}(k)|^2\\
\leq &\sum_{|\widetilde{h_n}k|>b}\frac{\epsilon_n}{4\pi^2}\left|\int_{\mathbb{R}}G(z)e^{-ik\epsilon_n}zdz\right|^2\\
=&\sum_{|\widetilde{h_n}k|>b}\frac{\epsilon_n}{4\pi^2}\left|\int_{\mathbb{R}}G(z)\frac{1}{-ik\epsilon_n}\frac{d}{dz}e^{-ik\epsilon_n}zdz\right|^2\\
\leq& \sum_{|\widetilde{h_n}k|>b}\frac{1}{4k^2\pi^2\epsilon_n}\|G'\|_{L^1(\mathbb{R})}^2.
\end{split}
\end{equation*}
By setting $\epsilon_n=\sqrt{\widetilde{h}_n}\ll 1$, we have $\|(1-\psi(\widetilde{h}_nD_x))G^{\epsilon_n}\|_{L^2(\mathbb{T})}\ll 1$ and then $\|v_{n,0}\|_{L^2(\mathbb{T})}\sim 1$.

Now we choose $B>0$ so that $|x-\|\Phi_1''\|_{L^{\infty}([-\delta,\delta])}Bt|\geq 2c_0>0$ mod $2\pi$ for all $x\in\omega=(-\pi,-\beta)\cup (\beta,\pi)$ and $|t|\leq T$.
Write
$$ v_n(t,x)=\sum_{k\in\mathbb{Z}}K_{t,x}^{(n)}(k)
$$
with
$$ K_{t,x}^{(n)}(z)=g^{\epsilon_n}(z)\psi(\widetilde{h}_nz)\exp\left(izx-i\widetilde{h}_n^{-\frac{1+\alpha}{1-\alpha}}\Phi_1(\widetilde{h}_n^{\frac{1}{1-\alpha}}z)t\right).
$$
From Poisson summation formula, we have
$$ v_n(t,x)=\sum_{m\in\mathbb{Z}}\widehat{K_{t,x}^{(n)}}(2\pi m).
$$
For fixed $m\in\mathbb{Z}$, 
\begin{equation*}
\begin{split}
\widehat{K_{t,x}^{(n)}}(2\pi m)=&\int_{\mathbb{R}}g^{\epsilon_n}(z)\psi(h_nz)e^{i\varphi_{t,x}(z)}dz\\
=&\int_{\mathbb{R}}g^{\epsilon_n}(z)\psi(h_nz)\mathcal{L}^2(e^{i\varphi_{t,x}(z)})dz
\end{split}
\end{equation*}
with $\displaystyle{\mathcal{L}=\frac{1}{i\varphi'_{t,x}(z)}\frac{d}{dz}}$ and $\varphi_{t,x}(z)=(x-2\pi m)z-\widetilde{h}_n^{-\frac{1+\alpha}{1-\alpha}}\Phi_1(\widetilde{h}_n^{\frac{1}{1-\alpha}}z)t$. Thus
\begin{equation*}
\begin{split}
\widehat{K_{t,x}^{(n)}}(2\pi m)=\int_{\mathbb{R}}\frac{d}{dz}\left(\frac{1}{i\varphi'_{t,x}(z)}\frac{d}{dz}\left(\frac{g^{\epsilon_n}(z)\psi(\widetilde{h}_nz)}{i\varphi'_{t,x}(z)}\right)\right)e^{i\varphi_{t,x}(z)}dz.
\end{split}
\end{equation*}
After tedious calculation, we have
\begin{equation*}
\begin{split}
&\frac{d}{dz}\left(\frac{1}{i\varphi'_{t,x}(z)}\frac{d}{dz}\left(\frac{g^{\epsilon_n}(z)\psi(\widetilde{h}_nz)}{i\varphi'_{t,x}(z)}\right)\right)\\ =&\frac{(g^{\epsilon_n})''\psi(\widetilde{h}_nz)+2\widetilde{h}_n(g^{\epsilon_n})'\psi'(\widetilde{h}_nz)+\widetilde{h}_n^2\psi''(\widetilde{h}_nz)g^{\epsilon_n}}{(\varphi'_{t,x})^2}\\
-&\frac{3((g^{\epsilon_n})'\psi(\widetilde{h}_nz)+\widetilde{h}_n\psi'(\widetilde{h}_nz)g^{\epsilon_n})\varphi''_{t,x}}{(\varphi'_{t,x})^3}\\
-&\frac{3g^{\epsilon_n}\psi(\widetilde{h}_nz)(\varphi''_{t,x})^2}{(\varphi'_{t,x})^4}.
\end{split}
\end{equation*}
From \eqref{coefficient}, we have 
$$ |\widehat{K_{t,x}^{(n)}}(2\pi m)|\leq \sup_{|\widetilde{h}_nz|\leq B} \frac{C\epsilon_n^{1/2}\|\psi\|_{W^{2,1}(\mathbb{R})}}{\left|(x-2\pi m)-\widetilde{h}_n^{\frac{-\alpha}{1-\alpha}}\Phi_1'(\widetilde{h}_n^{\frac{1}{1-\alpha}}z)t\right|^2}.
$$
Note that from Taylor expansion,
$$ \widetilde{h}_n^{\frac{-\alpha}{1-\alpha}}\Phi_1'(\widetilde{h}_n^{\frac{1}{1-\alpha}}z)t=\Phi_1''(\theta_n)\widetilde{h}_nzt-\Phi_1''(\theta_n)\sigma_{h_n}\widetilde{h}_n^{\frac{-\alpha}{1-\alpha}}t
$$
for some $\theta_n\in (\sigma_{h_n},\widetilde{h}_n^{\frac{1}{1-\alpha}}z)$. Therefore, for sufficiently large $n$, and for any $x\in 2\pi \mathbb{Z}+(-\beta,-\alpha)\cup (\beta,\pi]$, $$\left|x-\widetilde{h}_n^{\frac{-\alpha}{1-\alpha}}\Phi_1'(\widetilde{h}_n^{\frac{1}{1-\alpha}}z)t\right|\geq c_0>0 \textrm{ module } 2\pi, $$ thus
\begin{equation*}
\begin{split}
\sum_{m\in\mathbb{Z}} |\widehat{K_{t,x}^{(n)}}(2\pi m)|\leq & C\sum_{m\in\mathbb{Z}}\frac{C\epsilon_n^{1/2}}{|c_0-2\pi(m-p)|^2}\\
\leq & C\epsilon_n^{1/2}
\end{split}
\end{equation*}
holds for any $p\in\mathbb{Z}$.
Therefore,
$$ \int_0^T\int_{\omega}|v_n(t,x)|^2dxdt\leq C\epsilon_n^{1/2}T|\omega|\rightarrow 0, \textrm{ as }n\rightarrow\infty.
$$
\end{proof}

\begin{corollary}
	Suppose $0<\alpha<1$, then for any $T>0$, the observability for $u_n$, solutions of \eqref{fKPI} does not hold true.
\end{corollary}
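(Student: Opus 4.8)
The plan is to lift the one-dimensional counterexample of Proposition \ref{1Dcounterexample} back to a genuine two-dimensional solution of \eqref{fKPI} by reversing, step by step, the reduction carried out at the beginning of Section 3.2 and adapted at the start of this appendix. The key observation is that this reduction is purely algebraic in the time variable: writing a solution in a single Fourier mode $e^{ily}$ in $y$, identifying the transverse frequency $l$ with the large parameter $\lambda$, passing to semiclassical variables, localizing near the critical frequency $\xi_0$ of the symbol, and finally conjugating by $e^{i\lfloor \xi_0/h\rfloor x}$ to recenter the frequency support at the origin — none of these operations rescales $t$. Consequently the physical time interval $[0,T]$ is preserved throughout, and the counterexample produced in Proposition \ref{1Dcounterexample} over $[0,T]$ will directly furnish a two-dimensional counterexample over the same $[0,T]$.

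Concretely, first I would fix a sequence of positive integers $l_n\to\infty$ and set $h_n:=l_n^{-2/(\alpha+2)}\to 0$, so that the balance $l_n=h_n^{-(\alpha+2)/2}$ holds exactly and no rounding of the transverse mode is needed. For this sequence Proposition \ref{1Dcounterexample} supplies solutions $v_n$ of $h_n^{1+\alpha}D_tv_n-\Phi_\alpha(h_nD_x)v_n=0$ with frequency support recentered near the origin. Undoing the conjugation, I set $a_n(t,x):=e^{i\lfloor \xi_0/h_n\rfloor x}v_n(t,x)$; this moves the $x$-frequencies back to the scale $h_n^{-1}$ and makes $a_n$ a solution of the one-dimensional mode equation $\partial_ta_n-|D_x|^\alpha\partial_xa_n+l_n^2\partial_x^{-1}a_n=0$, while leaving $|a_n|=|v_n|$ pointwise. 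I then define
$$ u_n(t,x,y):=a_n(t,x)\,e^{il_ny}. $$
A direct computation, using $\partial_y^2 e^{il_ny}=-l_n^2e^{il_ny}$, shows that $u_n$ solves \eqref{fKPI}; since the $x$-frequencies of $a_n$ avoid $k=0$, we have $u_n\in C(\mathbb{R};L_0^2(\mathbb{T}^2))$, as required.

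It then remains to compare the two sides of the observability inequality. Orthogonality in $y$ gives $\|u_n(0)\|_{L^2(\mathbb{T}^2)}^2=2\pi\|a_n(0)\|_{L^2(\mathbb{T})}^2=2\pi\|v_n(0)\|^2$. For the observed quantity, because $u_n$ is a single $y$-mode, the $x$-average in \eqref{verticalcontrol} factors out and
$$ \mathcal{G}u_n(t,x,y)=g(x)\Big(a_n(t,x)-\textstyle\int_{\mathbb{T}}g(x')a_n(t,x')\,dx'\Big)e^{il_ny}. $$
Integrating in $y$ contributes a factor $2\pi$, and since $g$ is supported in $\omega=(a,b)$, both $g\,a_n$ and the scalar $\int_{\mathbb{T}}g\,a_n$ (estimated by Cauchy--Schwarz on $\omega$) are controlled by $\|a_n\|_{L^2(\omega)}=\|v_n\|_{L^2(\omega)}$; hence
$$ \int_0^T\!\!\int_{\mathbb{T}^2}|\mathcal{G}u_n|^2\,dx\,dy\,dt\leq C\int_0^T\!\!\int_{\omega}|v_n(t,x)|^2\,dx\,dt. $$
Dividing the two displays and invoking Proposition \ref{1Dcounterexample} shows the ratio tends to $0$, which is incompatible with \eqref{observabilityinequality} for any fixed $C_T$, proving the failure of observability.

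I expect the only genuinely delicate points to be bookkeeping: ensuring that the transverse mode $l_n$ is an integer (handled cleanly by the choice $h_n=l_n^{-2/(\alpha+2)}$), verifying that the reduction indeed leaves the time variable untouched so that $[0,T]$ matches on both sides, and checking that the observation operator $\mathcal{G}$, with its cutoff $g$ and $x$-mean subtraction, is dominated by the bare localized mass $\int_\omega|v_n|^2$ that appears in the one-dimensional statement. Each of these is routine once the frame changes are tracked carefully, so the substance of the result is already contained in Proposition \ref{1Dcounterexample}.
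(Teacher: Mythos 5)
Your proposal is correct and follows essentially the same route as the paper: take the $1$D counterexample $v_n$ from Proposition \ref{1Dcounterexample} with $h_n$ chosen so that $l_n=h_n^{-(\alpha+2)/2}$ is an integer, undo the conjugation by $e^{i\lfloor\xi_0/h_n\rfloor x}$, tensor with the single transverse mode $e^{il_ny}$, and compare the two sides of \eqref{observabilityinequality}. The only (minor) divergence is in handling the mean-subtraction in $\mathcal{G}$: you dominate the scalar $\int_{\mathbb{T}}g\,a_n$ by Cauchy--Schwarz with the localized mass $\|v_n\|_{L^2(\omega)}$, whereas the paper invokes the separate observation that this mean tends to $0$ in $L^2((0,T)\times\mathbb{T}_y)$; both are valid, and your version is if anything slightly more self-contained.
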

\begin{proof}
We take $h_n,v_n$ as in Proposition \ref{1Dcounterexample}. Define
$$ u_n(t,x,y)=w_n(t,x)\exp\left(iyh_n^{-\frac{\alpha+2}{2}}\right),
$$
where $(h_n^{-\frac{\alpha+2}{2}})$ is a sequence of positive integers which converges to infinity. $u_n$ solves \eqref{fKPI} means that
$$ h_n^{\alpha+1}\partial_tw_n-|h_nD_x|^{\alpha}h_n\partial_xw_n-h_n^{-1}\partial_x^{-1}w_n=0.
$$
Now we set $w_n=v_ne^{\left\lfloor\frac{i\xi_0}{h_n}\right\rfloor x}$, and we have from Proposition \ref{1Dcounterexample} that
$$ \lim_{n\rightarrow\infty}\frac{\int_0^T\int_{\mathbb{T}^2}|g(x)u_n(t,x,y)|^2dxdydt}{\int_{\mathbb{T}^2}|u_n(0,x,y)|^2dxdy}=0.
$$
We finally need replace $\displaystyle{\int_0^T\int_{\mathbb{T}^2}|g(x)u_n(t,x,y)|^2dxdydt}$ by $\displaystyle{\int_0^T\int_{\mathbb{T}^2}|\mathcal{G}u_n(t,x,y)|^2dxdydt}$. This is guranteed by 
$$ \int_{\mathbb{T}}g(x')u_n(t,x',y)dx'\rightarrow 0, \quad \textrm{ in } L^2((0,T)\times\mathbb{T}_y)
$$
from our construction. This completes the proof.
\end{proof}

\subsection*{Acknowledgement} The author would like to thank his PhD advisor, Professor Gilles Lebeau, for his valuable suggestions and comments.

%%%%%%%%%%%%%%%%%%%%%%%%%%%%%%%%%%%%%%%%%%%%%%%%%%%%%%%%%%%

\begin{center} 
	
\end{center}

\end{document}